\numberwithin{equation}{section}
\newcommand{\limit}[3]{\displaystyle \lim_{#1 \to #2} #3}
\newcommand{\R}[1]{\mathbb{R}^{#1}}
\newcommand{\Z}{\mathbb{Z}}
\newcommand{\Q}{\mathbb{Q}}
\newcommand{\pprime}{{\prime\prime}}
\newcommand{\fpd}[2]{\frac{\partial #1}{\partial #2}}
\newcommand{\bs}[1]{\boldsymbol{#1}}
\newcommand{\twobytwo}[4]{\left(\begin{array}{cc}
#1 & #2  \\
#3 & #4 \end{array} \right)}
\newcommand{\ip}[2]{\left<  #1, #2 \right> }
\theoremstyle{plain}
\newtheorem{theorem}{Theorem}
\newtheorem{prop}{Proposition}
\newtheorem{cor}{Corollary} 
\newtheorem{defn}{Definition}
\newtheorem{remark}{Remark}
\title[On the Dynamics of Inverse Magnetic Billiards]{On the Dynamics of Inverse Magnetic Billiards}
\author{Sean Gasiorek}
\address{School of Mathematics and Statistics, Carslaw Building F07, University of Sydney, NSW 2011 Australia}
\email{sean.gasiorek@sydney.edu.au}
\begin{document}

\begin{abstract}
Consider a strictly convex set $\Omega$ in the plane, and a homogeneous, stationary magnetic field orthogonal to the plane whose strength is $B$ on the complement of $\Omega$ and $0$ inside $\Omega$. The trajectories of a charged particle in this setting are straight lines concatenated with circular arcs of Larmor radius $\mu$. We examine the dynamics of such a particle and call this \emph{inverse magnetic billiards}. Comparisons are made to standard Birkhoff billiards and magnetic billiards, as some theorems regarding inverse magnetic billiards are consistent with each of these billiard variants while others are not. 
\end{abstract}

\maketitle

%\begin{center}
% Last Compiled:  \today \\
% Content Last Updated: 11/1/19
%\end{center}

%\vskip20pt

\section{Introduction}\label{intro}

Consider the classical motion of a particle of mass $m$ and charge $e$ in the plane. Let $\Omega \subset \R{2} $ denote a connected, strictly convex domain, and define a constant, homogeneous, stationary magnetic field orthogonal to the plane which has strength $B$ on $\R{2} \setminus \Omega$ and 0 on $\Omega$. As such, the equations of motion for the particle of position $q$ and velocity $v$ are as follows: \newline
$$ \begin{cases} \dot{q} &= v \\ \dot{v} &= B_\Omega(q) \mathbb{J}v \end{cases} \hspace{0.3cm} \text{with} \hspace{0.3cm} \mathbb{J} := \twobytwo{0}{-1}{1}{0}, \hspace{0.3cm} B_\Omega(q) := \begin{cases} 
      0 & q \in \Omega \\
      B & q \in \R{2} \setminus \Omega  
   \end{cases}.  $$ \\
The solution to this initial value problem are continuous curves which are circular arcs outside $\Omega$ and straight lines inside $\Omega$. The circular arcs will have Larmor radius $\mu=\frac{m|v|}{|eB|}$, and speed $|\dot{q}|$ and energy $E$ are constants of motion. %We choose $\mu$ to be a dimensionless parameter so that increasing $B$ is equivalent to increasing the size of $\Omega$ or decreasing $E$. %, will have centers $c(v,q) = q+\frac{m}{eB}\mathbb{J}v$. 
Without loss of generality we assume $e<0$ and $B >0$ so that the motion along the circular arcs
will be traversed in the counterclockwise direction. 

Following the construction in \cite{BK}, suppose the boundary $\partial \Omega$ is $C^k$ with $k \geq 3$ and total length $|\partial \Omega | =L$. The boundary $\partial \Omega = Image(\Gamma(s))$ will be parametrized by arc length, $s$:
$$ \Gamma(s) = (X(s), Y(s)), \hspace{0.5cm} ds^2 = dX^2 + dY^2, \hspace{0.5cm} s \in \R{}/L\Z.$$ 
The unit tangent and unit normal vectors and curvature are given by 
\begin{align*}
\bs{t}(s) &= (X^\prime (s), Y^\prime(s)) = (\cos(\tau(s)), \sin(\tau(s))), \\
\bs{n}(s) &= (-Y^\prime(s),X^\prime(s)),\\
\kappa(s) &= \frac{d\tau}{ds} = X^\prime(s)Y^\pprime(s) - X^\pprime(s) Y^\prime(s) = \frac{1}{\rho(s)},
\end{align*}
so that $\tau(s)$ is the polar angle between the positive $x$-axis and $\bs{t}(s)$, and $\rho(s)$ is the radius of curvature. Because $\Omega$ is strictly convex the curvature of the boundary is strictly positive and $\rho(s)$ is bounded by positive constants, $0 < \rho_{min} \leq \rho(s) \leq \rho_{max} < \infty$ for all $s$. Following the lead of \cite{RB}, we will explore the dynamics of our system  in terms of the relative sizes of the Larmor radius $\mu$ and the maximum and minimum radii of curvature of $\partial \Omega$. We will refer to these possibilities $$\mu < \rho_{min}, \;\;\;\;\;\; \rho_{min} < \mu < \rho_{max}, \;\;\;\;\;\; \rho_{max} < \mu$$ as \emph{curvature regimes}. The billiard flow is hence given by the Lagrangian $$\mathcal{L}(q,\dot{q}) = \frac{1}{2}m |\dot{q}|^2 + e\ip{\dot{q}}{\mathbb{A}(q)}, \;\;\;\; \mathbb{A}(q) =\frac{1}{2}(-yB_\Omega(q), xB_\Omega(q)) =  \frac{1}{2} B_\Omega(q)\mathbb{J}q $$ where $\ip{\cdot}{\cdot}$ is the standard Euclidean inner product. We call this dynamical system \emph{inverse magnetic billiards}, following the naming by \cite{VTCP}. \\

%\section{Previous Results}\label{PrevRes}
%
%\noindent Standard ``Birkhoff'' billiards is a vast, well-studied subject with many open questions which began over a century ago (see books \cite{Tab}, \cite{KT}, \cite{CM}, for example). Magnetic billiards, where a homogeneous magnetic filed is placed inside the billiard table and the billiard ball is seen as a charged particle, was first studied by Robnik and Berry in the 1980's (\cite{Ro}, \cite{RB}) and rigorously studied by Berglund and Kunz in the 1990's (\cite{BK}, \cite{B}). In fact, outer magnetic billiards is briefly discussed in \cite{BK} but only in the context of boundaries with negative curvature and the duality of inner and outer magnetic billiards. However, inverse magnetic billiards has not been studied thoroughly. 
%
%\noindent In \cite{KS1}, a ``magnetic bump'' or ``quantum dot'' is studied where there is a magnetic field inside some convex set and no magnetic field outside the set. A charged particle is then scattered  by encountering such a magnetic bump, and the symbolic dynamics of a sequence of such dots shows hyperbolic behavior. Further, in \cite{KSS}, if the magnetic bumps are assumed to have a rotationally symmetric magnetic field inside, this dynamical system with $\geq 2$ bumps exhibits chaotic behavior and positive topological entropy. 
%
 Electron dynamics in piecewise-constant magnetic fields are studied in \cite{CP}, \cite{KPC}, \cite{KROC}, \cite{No}, \cite{SIKL}, \cite{SG}, and \cite{VTCP}. Classical, semiclassical, and quantum approaches to this system are each addressed to a degree -- occasionally in compact subsets and sometimes in unbounded regions -- but none are in-depth mathematically to the extent of \cite{BK} with respect to magnetic billiards, for example. \\

%\noindent The system of inverse magnetic billiards has been briefly studied in the context of condensed-matter physics. %Developments in nanotechnology have allowed for confinement of charged particles modeled by a 2-dimensional electron gas (2DEG) in an inhomogeneous magnetic field, and for certain nanostructures the electron's dynamics are dominated by classical kinematic motion with no concern or influence of quantum effects. The inhomogeneous or piecewise-constant magnetic field can be realized by placing a superconductor over the 2DEG, or by changing physical characteristics of the environment of the 2DEG. 
%In \cite{VTCP}, the dynamics of inverse magnetic billiards is studied numerically in a piecewise-constant magnetic field to find two quantities for the Bunimovich stadium: %(a rectangle with two of its opposing sides capped by semicircles): 
%the Lyapunov exponent and the integrable/chaotic phase space ratio, each as a function of the magnetic field strength $B$. Their findings indicate both quantities are smooth functions of $B$, and there is a smooth transition from chaos ($B=\infty$), a known billiard result, to integrable ($B=0$). From this they conclude that the level of chaos in inverse magnetic billiards in the Bunimovich stadium is controllable.

This paper is strongly influenced by the work of Berglund and Kunz in \cite{BK}, and is organized as follows. Section 2 gives a thorough description of the billiard flow and describes its motion through a return map $T$. An exact expression is given for the Jacobian $DT$. The map $T$ is sometimes a twist map and admits a generating function $G$, which is given explicitly in section 3. In section 4 we address the existence of periodic orbits using $G$. Some calculations are made in section 5 that are specific to the ellipse. Section 6 details the existence and nonexistence of caustics using approaches similar to Mather, Berglund, and Kunz. 

\section{Constructing the return map}\label{RetMapConst}

As the particle moves, it successively leaves and re-enters $\Omega$ at the points $P_0, P_1, \newline P_2, P_3 \ldots \in \partial \Omega$. Index these points so that points with even index $P_0, P_2, P_4, \ldots$ are re-entry points and points $P_1, P_3, P_5, \ldots$ of odd index are exit points.   Express the  oriented line segment  $P_0 P_1$ joining each  entry point to its  successive exit point as a vector   $\ell_1 \vec v_0 = P_1 - P_0 $ where $\vec v_0$ is the unit vector representing the direction of motion of the particle while it travels inside $\Omega$ from $P_0$ to $P_1$ and where $\ell_1 = |P_0P_1|$ is the chord distance it travels.

The entire dynamics is summarized by the map $T: M \to M$ which takes $(P_0, v_0)$ to $(P_2, v_2)$,
sending reentry point and direction  to successive re-entry point and direction. The phase space $M$ of the map $T$ consists of unit vectors $(P_i,v_i)$ whose base points $P$ are on $\partial\Omega$ with inward direction $v$. We call this map the \emph{return map} and will express it in terms of the Birkhoff coordinates used in standard billiards. Coordinatize $P_0$   by its arc length parameter $s_0$ and the vector  $v_0$ by the negative cosine of the angle $\theta_0$ between the tangent to $\Gamma$ at $P_0$ and this vector. Writing $u_i = -\cos(\theta_i)$ we call $(s_i, u_i)$ the Birkhoff coordinates of the trajectory as
it exits or re-enters $\Omega$ at $P_i$. 

The phase space $M$ can be identified with the annulus $\mathcal{P} = \R{}/L\Z \times [-1,1] \cong S^{1} \times [-1,1]$, the return map $T$ can then be written 
as a map $$T: \mathcal{P} \to \mathcal{P}, \;\;\;\; (s_{2i}, u_{2i}) \mapsto (s_{2i+2}, u_{2i+2})$$ so that $T$ is a smooth map of the closed annulus, $\mathcal{P}$. Further, the restriction $T|_{\partial\mathcal{P}} = \text{Id}_\mathcal{P}$, where the boundary $\partial \mathcal{P}$ of $\mathcal{P}$ is the usual boundary of $\mathcal{P}$, namely $\left(S^1 \times \{-1\} \right)\cup \left( S^1 \times \{1 \}\right)$. \\

 At times it may be easier to work with $T$ as a map in terms of $(s_i,\theta_i)$. In particular, we will compute Taylor expansions of $T$ in section \ref{CausticsChapter} in terms of $s$ and $\theta$. With this interpretation, we see the inverse magnetic billiard as a discrete dynamical system. \\

By construction define $\ell_i = |P_{i-1} P_i|$ for integers $i$. %  $\ell_2( = |P_1 P_2|$. 
Define $\mathcal{A}_{2i,2i+2}$ to be the area between chord $P_{2i+1}P_{2i+2}$ and $\Gamma(s)$ that is also inside the Larmor circle, and let $\gamma_{2i,2i+2}$ be the circular arc of Larmor radius $\mu$ that is outside $\Omega$. Let $\mathcal{S}_{2i,2i+2}$ be the area within the circular arc $\gamma_{2i,2i+2}$ and outside $\Omega$. Define $\chi_{2i,2i+2}$ to be the angle measured counterclockwise from $\overrightarrow{P_{2i}P_{2i+1}}$ to $\overrightarrow{P_{2i+1}P_{2i+2}}$. See Figure \ref{stdpic1} for the case when $i=0$. 

\begin{figure}[tb]
\begin{center}
\includegraphics[width=0.6\textwidth]{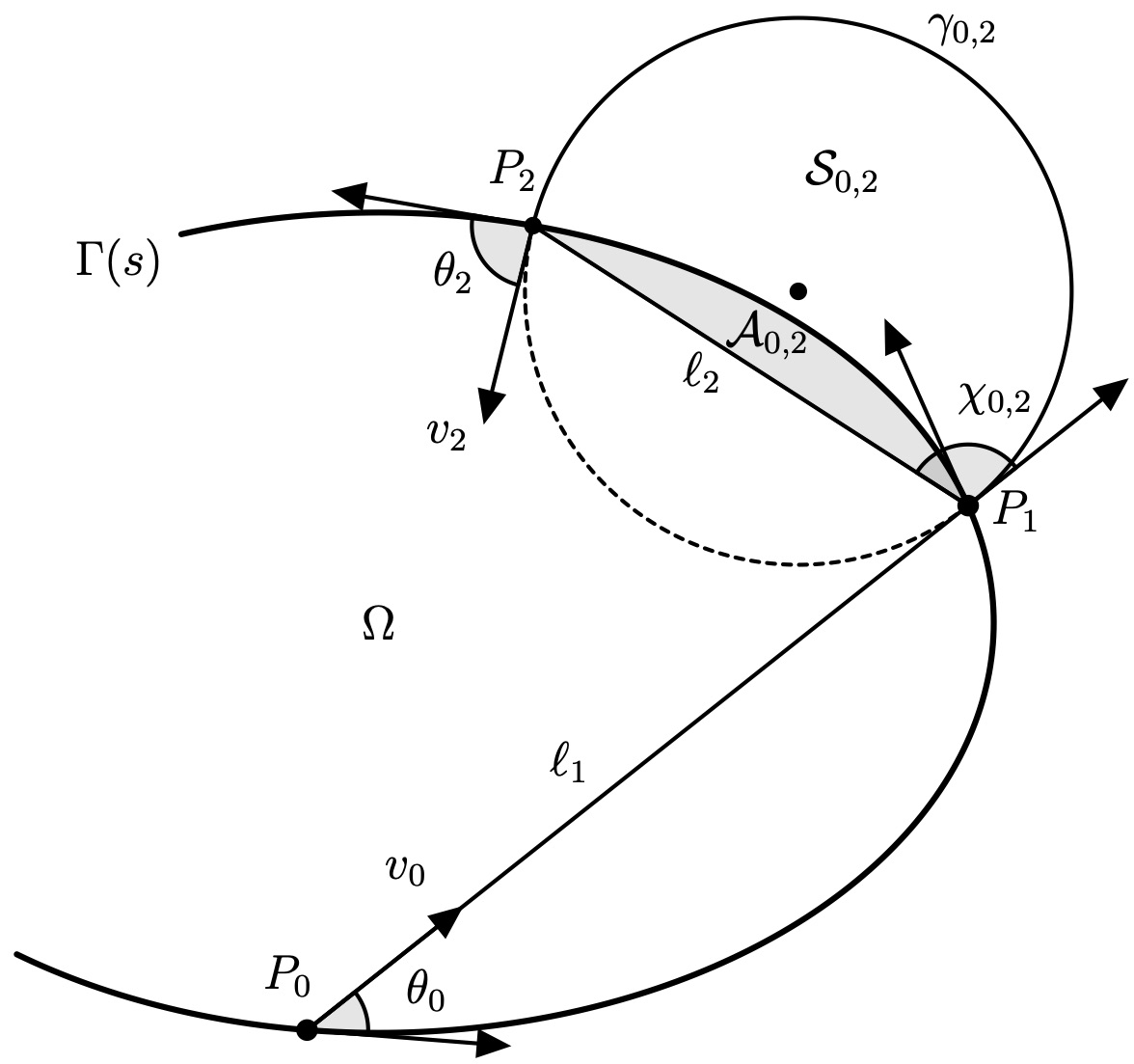} %0.6 w/o fullpage
\end{center}
\caption{The standard picture of the return map, $T$. }
\label{stdpic1}
\end{figure} 

%\noindent By construction, we see that the following restrictions hold for each integer $i$:
%\begin{align*} 
%0 &< \theta_i < \pi  \\
%0 &< \chi_{2i,2i+2} < \pi  \\
%0 &< \ell_{2i+1} < Diam(\Omega) \\
%0 &< \ell_{2_i+2} < \min\left\{2\mu, Diam(\Omega) \right\}
%\end{align*} 
%where $Diam(\Omega) = \displaystyle \max_{a,b \in \Omega} d(a,b)$ is the diameter of $\Omega$. 

\begin{remark} For notational simplicity, we now omit the subscripts $2i,2i+2$, assume $i=0$, and recognize each of the described quantities below are associated to a single iteration of the return map $T$ and its realized trajectory.
\end{remark}

 Consider the magnetic arc, $\gamma$. Let the angle of such an arc be $\psi$, $\varepsilon = 2\pi-\psi$, $\delta$ is the angle between the chord $P_1P_2$ and the radius of the arc connecting each of $P_1$ and $P_2$ to the center of $\gamma$. See Figure \ref{magarcdetailsuppchi}a. From the definition of these angles and elementary geometry we find that %we find the following to be true:
%\begin{align*}
%2\delta + \varepsilon &= \pi \\
%\psi + \varepsilon &= 2\pi \\
%\chi - \delta &= \frac{\pi}{2}.
%\end{align*}
%Solving this system and using elementary geometry yields 
$$\psi = 2\chi ~~~ \text{ and } ~~~ \sin(\chi) = \frac{\ell_2}{2\mu}.$$ It is important to note that there may be two trajectories with supplementary $\chi$ for a given chord length $\ell_2$. This is a characteristic effect of magnetic billiards. See Figure \ref{magarcdetailsuppchi}b for such an example.

\begin{figure}[ht]
\begin{tabular}{ l l } 
(a) \includegraphics[width = 0.47\textwidth]{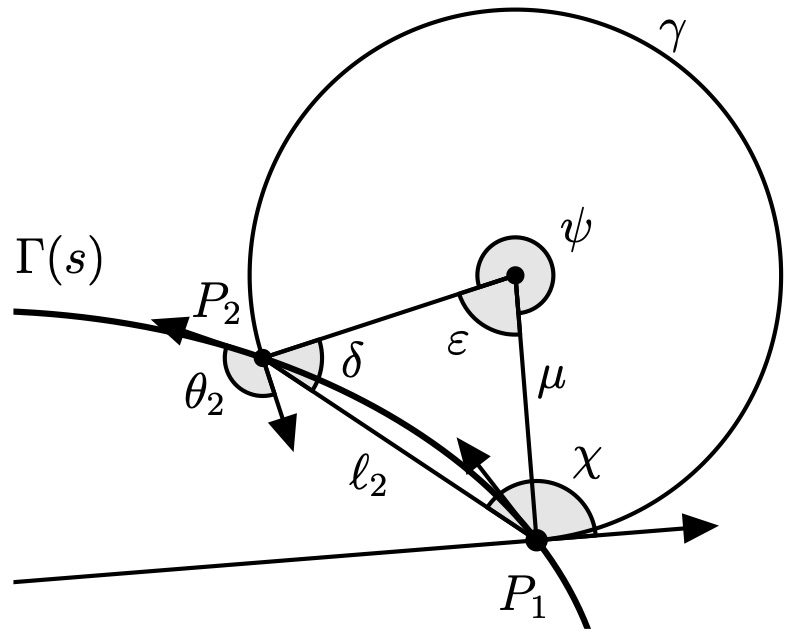} & (b) \includegraphics[width = 0.43\textwidth]{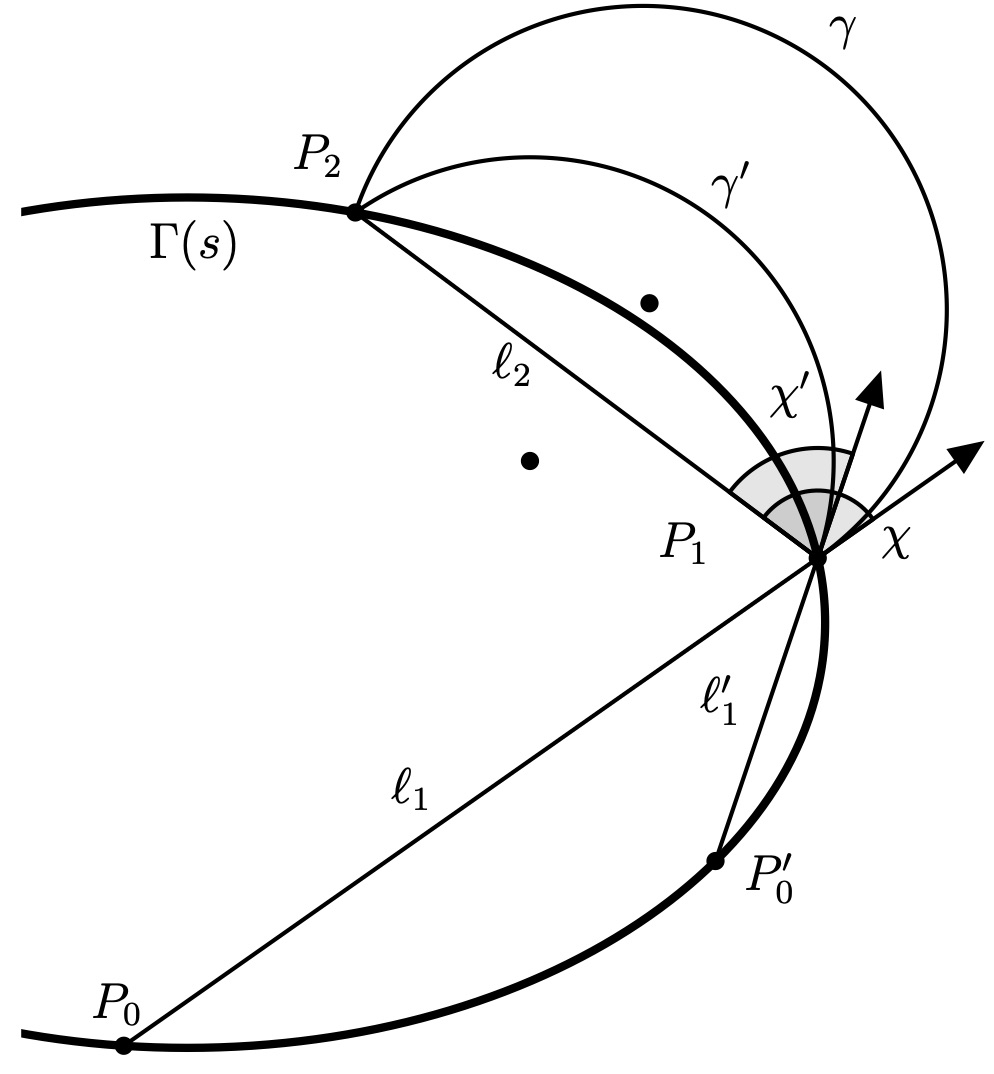} \\
\end{tabular}
\caption{(a) A magnetic arc. (b) An example of two trajectories with the same $\ell_2$ where $\chi$ and $\chi^\prime$ are supplementary. }
\label{magarcdetailsuppchi}
\end{figure}

%\begin{prop}\label{ChiEqns}
%For small $\theta_1>0$, we can approximate $\chi$ as follows: 
%$$ \chi(\theta_1) \approx \begin{cases}
%\theta_1 + \arcsin\left( \frac{\mu \sin(\theta_1)}{\rho_1 - \mu} \right) & \text{ if } \mu < \rho_{min} \\
%\theta_1 +\pi - \arcsin\left( \frac{\mu \sin(\theta_1)}{-(\rho_1 - \mu)} \right) & \text{ if } \rho_{max} < \mu \\
%\end{cases}$$
%%$$\chi \approx \frac{\rho_1}{\rho_1-\mu} \theta_1 + O(\theta_1^3)$$ 
%and for $\theta_1 = \pi - \eta_1$ with small $\eta_1>0$, 
%$$ \chi(\eta_1) \approx \pi - \eta_1 + \arcsin \left(\frac{\mu \sin(\eta_1)}{\rho_1 + \mu} \right)$$ where $\rho_1 = \frac{1}{\kappa(s_1)}$ is the radius of curvature at the point $P_1$. 
%\end{prop}
%
% These expressions follow by approximating $\Gamma(s)$ with its osculating circle at $s=s_1$ and approximating the location of $P_2$ by the intersection point of the Larmor circle and the osculating circle that isn't $P_1$. Further, the distance between the centers of these two circles is approximated by $\pm(\rho_1 - \mu)$ as appropriate. See Appendix A of \cite{G} for full details. 
%We note that $\chi$ plays a strong role in the geometry and dynamics of this problem, so understanding $\chi$ to the fullest extent is a worthwhile task. \\

We decompose $T$ into its two distinct pieces. Define the map $T_1: (s_0, u_0) \mapsto (s_1,u_1)$ as the analogue to the standard billiard map. %The particle travels along the straight line segment from $P_0$ to $P_1$. 
The map $T_2:(s_1, u_1) \mapsto (s_2,u_2)$ is the particle moving from $P_1$ along the circular arc $\gamma$ of Larmor radius $\mu$ until intersecting $\partial \Omega$ again at $P_2$. Thus $T = T_2 \circ T_1$. 

\begin{prop}\label{JacCalc}
Given the maps $T_1$ and $T_2$, the Jacobians $DT_1 = \twobytwo{\fpd{s_1}{s_0}}{\fpd{s_1}{u_0}}{\fpd{u_1}{s_0}}{\fpd{u_1}{u_0}}$ and $DT_2  = \twobytwo{\fpd{s_2}{s_1}}{\fpd{s_2}{u_1}}{\fpd{u_2}{s_1}}{\fpd{u_2}{u_1}}$ have components

\begin{tabular}{ l l }
$\displaystyle \fpd{s_1}{s_0} = \frac{\kappa_0 \ell_1-\sin(\theta_0)}{\sin(\theta_1)}$ &  $\displaystyle \fpd{s_1}{u_0} = \frac{\ell_1}{\sin(\theta_0)\sin(\theta_1)}$ \\\\
$\displaystyle \fpd{u_1}{s_0} = \kappa_0\kappa_1\ell_1 - \kappa_1\sin(\theta_0) - \kappa_0\sin(\theta_1)$ & $\displaystyle \fpd{u_1}{u_0} = \frac{\kappa_1\ell_1 - \sin(\theta_1)}{\sin(\theta_0)}$ \\\\
$\displaystyle \fpd{s_2}{s_1} = \frac{\sin(2\chi -\theta_1)-\kappa_1\ell_2\cos(\chi)}{\sin(\theta_2)}$ & $\displaystyle \fpd{s_2}{u_1} = \frac{\ell_2\cos(\chi)}{\sin(\theta_1)\sin(\theta_2)}$ \\\\
$\displaystyle \fpd{u_2}{s_1} = \frac{\sin(2\chi - \theta_1)\sin(2\chi-\theta_2)-\sin(\theta_1)\sin(\theta_2)}{\ell_2\cos(\chi)}$ & \multirow{2}{*}{$\displaystyle \fpd{u_2}{u_1} = \frac{\sin(2\chi-\theta_2)-\kappa_2\ell_2\cos(\chi)}{\sin(\theta_1)}$} \\
$\displaystyle \;\;\;\;\;-\kappa_1\sin(2\chi-\theta_2) - \kappa_2\sin(2\chi-\theta_1) + \kappa_1\kappa_2\ell_2\cos(\chi)$ & \\ \\
\end{tabular}

%
%\begin{align*}
%\fpd{s_1}{s_0} &= \frac{\kappa_0 \ell_1-\sin(\theta_0)}{\sin(\theta_1)} \\
%\fpd{s_1}{u_0} &= \frac{\ell_1}{\sin(\theta_0)\sin(\theta_1)} \\
%\fpd{u_1}{s_0} &= \kappa_0\kappa_1\ell_1 - \kappa_1\sin(\theta_0) - \kappa_0\sin(\theta_1) \\
%\fpd{u_1}{u_0} &= \frac{\kappa_1\ell_1 - \sin(\theta_1)}{\sin(\theta_0)} \\
%\fpd{s_2}{s_1} &= \frac{\sin(2\chi -\theta_1)-\kappa_1\ell_2\cos(\chi)}{\sin(\theta_2)} \\
%\fpd{s_2}{u_1} &= \frac{\ell_2\cos(\chi)}{\sin(\theta_1)\sin(\theta_2)}\\
%\fpd{u_2}{s_1} &= \frac{\sin(2\chi - \theta_1)\sin(2\chi-\theta_2)-\sin(\theta_1)\sin(\theta_2)}{\ell_2\cos(\chi)} \\
% & \;\;\;\;\;- \kappa_1\sin(2\chi-\theta_2) - \kappa_2\sin(2\chi-\theta_1) + \kappa_1\kappa_2\ell_2\cos(\chi) \\
%\fpd{u_2}{u_1} &= \frac{\sin(2\chi-\theta_2)-\kappa_2\ell_2\cos(\chi)}{\sin(\theta_1)}. 
%\end{align*}
\noindent Furthermore, $\det(DT_1) = 1$ and $\det(DT_2)=1$. 
\end{prop}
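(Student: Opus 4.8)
The plan is to handle $T_1$ and $T_2$ by a single differentiation scheme: write the vector equation joining the two boundary points of each leg, differentiate it, and project onto the chord direction and its normal to read off the partials. Throughout I would pass between the angle $\theta_i$ and the Birkhoff momentum $u_i=-\cos\theta_i$ via $du_i=\sin\theta_i\,d\theta_i$, and I would use the two links between the direction $\phi$ of travel and the tangent angle $\tau$, namely $\phi=\tau-\theta$ at an exit point and $\phi=\tau+\theta$ at a re-entry point, together with $d\tau=\kappa\,ds$. These relations are exactly what convert the Cartesian differential into the stated $(s,u)$ entries.

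For $T_1$ the leg is the straight chord $\Gamma(s_1)-\Gamma(s_0)=\ell_1\bs v_0$ with $\bs v_0$ at angle $\phi_0=\tau_0+\theta_0$. Differentiating gives $\bs t(s_1)\,ds_1-\bs t(s_0)\,ds_0=\bs v_0\,d\ell_1+\ell_1\bs v_0^\perp\,d\phi_0$. Projecting onto $\bs v_0^\perp$ annihilates $d\ell_1$, and inserting $\langle\bs t(s_1),\bs v_0^\perp\rangle=\sin\theta_1$, $\langle\bs t(s_0),\bs v_0^\perp\rangle=-\sin\theta_0$, and $d\phi_0=\kappa_0\,ds_0+d\theta_0$ yields $\sin\theta_1\,ds_1+\sin\theta_0\,ds_0=\ell_1\,d\phi_0$; solving for $ds_1$ and substituting $d\theta_0=du_0/\sin\theta_0$ produces $\fpd{s_1}{s_0}$ and $\fpd{s_1}{u_0}$. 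Feeding this into $d\theta_1=\kappa_1\,ds_1-\kappa_0\,ds_0-d\theta_0$ and multiplying by $\sin\theta_1$ gives the bottom row.

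The same scheme applies to the arc chord $\Gamma(s_2)-\Gamma(s_1)=\ell_2\bs w$ for $T_2$, but here the chord direction $\phi=\phi_1+\chi=\tau_1-\theta_1+\chi$ and the length $\ell_2=2\mu\sin\chi$ are both governed by the auxiliary arc variable $\chi$, and the velocity turns by $\psi=2\chi$ across the arc so that $\theta_2=\tau_1-\tau_2-\theta_1+2\chi$. This coupling is the crux of the proof: differentiating now introduces $d\chi$, so I must use both projections rather than one. The $\bs w$-projection gives $2\mu\cos\chi\,d\chi=\cos(\theta_2-\chi)\,ds_2-\cos(\theta_1-\chi)\,ds_1$, which I solve for $d\chi$; substituting into the $\bs w^\perp$-projection and collapsing the resulting pairs with $\sin(\theta_2-\chi)\cos\chi+\cos(\theta_2-\chi)\sin\chi=\sin\theta_2$ and the analogous identity at $P_1$ isolates $\sin\theta_2\,ds_2=[\sin(2\chi-\theta_1)-\kappa_1\ell_2\cos\chi]\,ds_1+\ell_2\cos\chi\,d\theta_1$, which is $\fpd{s_2}{s_1}$ and $\fpd{s_2}{u_1}$. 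The bottom row then follows by inserting $ds_2$ and $d\chi$ into $d\theta_2=\kappa_1\,ds_1-d\theta_1+2\,d\chi-\kappa_2\,ds_2$ and using $2\sin\chi\cos(\theta_2-\chi)=\sin\theta_2+\sin(2\chi-\theta_2)$. This last reduction, which makes the many-term expression for $\fpd{u_2}{s_1}$ collapse to the stated form, is where the algebra will be heaviest and is the main obstacle.

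For the determinants I would avoid expanding the four final entries. In the intermediate $(s,\theta)$ coordinates the computations above show that each leg has Jacobian determinant $\sin\theta_{\mathrm{in}}/\sin\theta_{\mathrm{out}}$: for $T_1$ the $\kappa_1$ contributions cancel and one is left with $\sin\theta_0/\sin\theta_1$, and the same $\chi$-elimination gives $\sin\theta_1/\sin\theta_2$ for $T_2$. Since the substitution $u=-\cos\theta$ contributes a factor $\sin\theta_{\mathrm{out}}$ on the target and $1/\sin\theta_{\mathrm{in}}$ on the source, these weights exactly cancel the angle ratio, so $\det(DT_1)=\det(DT_2)=1$. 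Equivalently one may check directly that $d\ell_1=u_0\,ds_0-u_1\,ds_1$, exhibiting $T_1$ as exact symplectic, and observe that $T_2$ is the transition map of the Hamiltonian magnetic flow between boundary crossings and hence preserves $ds\wedge du$; either route confirms that both maps are area preserving.
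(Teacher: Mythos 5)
Your proposal is correct and is essentially the paper's own argument: differentiating the vector chord equation and projecting onto $\bs{w}$ and $\bs{w}^\perp$ reproduces exactly the paper's partial derivatives of $\ell_2$ and of the chord's polar angle $\alpha_1$, combined with the same angle relations $\theta_1 = \tau_1 - \alpha_1 + \chi$, $\theta_2 = \alpha_1 + \chi - \tau_2$ and $\sin(\chi) = \frac{\ell_2}{2\mu}$, followed by solving the same linear system for $ds_2, d\theta_2$ and converting via $du_i = \sin(\theta_i)\,d\theta_i$. Your determinant argument via exact symplecticity of $T_1$ and the Hamiltonian character of the magnetic arc is a clean (and valid) shortcut where the paper simply asserts $\det(DT_1) = \det(DT_2) = 1$, but this does not change the substance of the proof.
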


\begin{figure}[htb]\label{fixeds}
\includegraphics[width=0.7\textwidth]{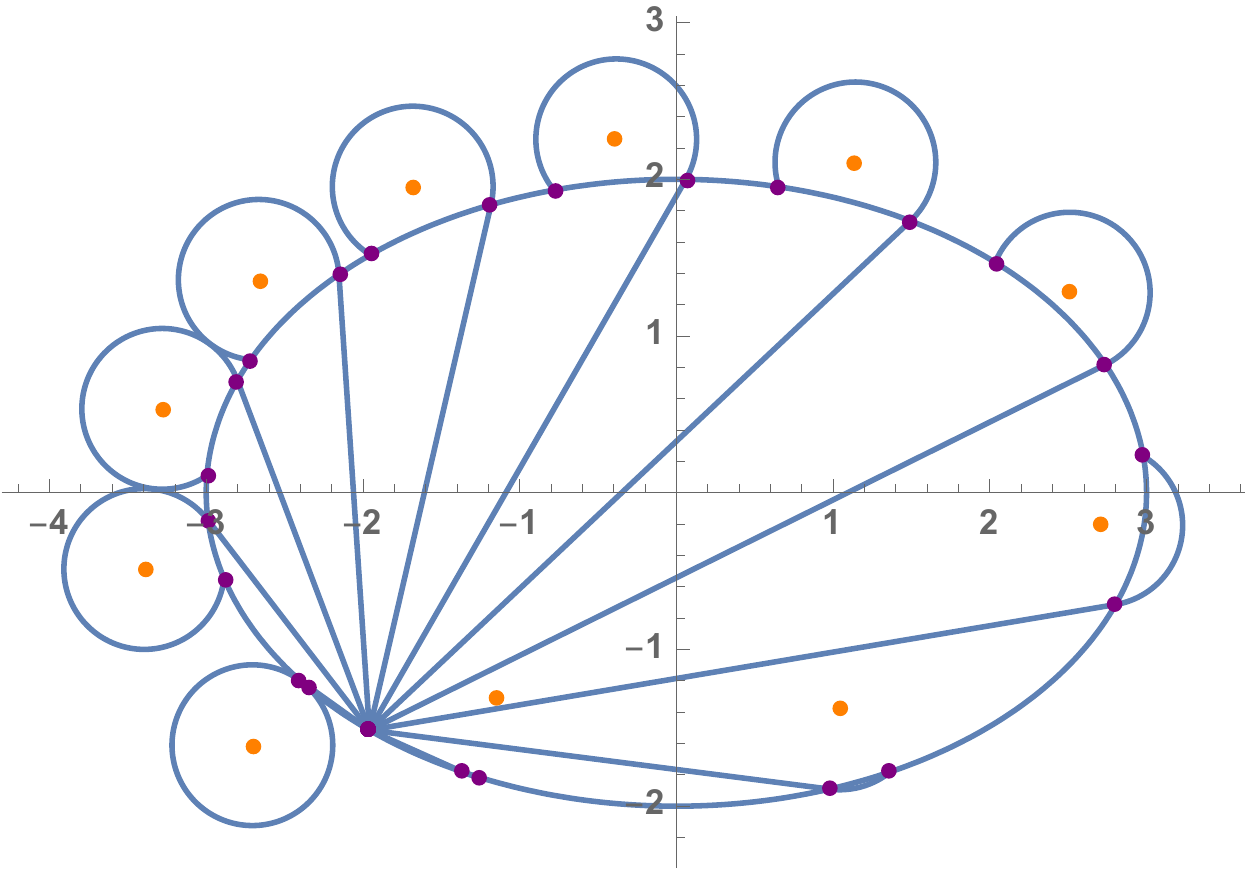}
\caption{The behavior of the return map for fixed $s_0$ and varying $u_0$ when $\mu < \rho_{min}.$ Also shown are the Larmor centers and points $P_1, P_2$ for each corresponding value of $u_0$.}
\end{figure}

 The details of this proof are given in Appendix \ref{jacpf}. The components of $DT_1$ are well-known while the components of $DT_2$ are analogous to those found in Proposition 1 of \cite{BK}.

\begin{cor}
Let $T = T_2 \circ T_1$. Then $DT = \twobytwo{\fpd{s_2}{s_0}}{\fpd{s_2}{u_0}}{\fpd{u_2}{s_0}}{\fpd{u_2}{u_0}}$ with 
\begin{align*}
\fpd{s_2}{s_0} &= \frac{\kappa_0\ell_1\sin(2\chi-\theta_1) - \sin(\theta_0)\sin(2\chi-\theta_1)-\kappa_0\ell_2\cos(\chi)\sin(\theta_1)}{\sin(\theta_1)\sin(\theta_2)} \\
\fpd{s_2}{u_0} &= \frac{\ell_1\sin(2\chi-\theta_1) - \ell_2\cos(\chi)\sin(\theta_1)}{\sin(\theta_0)\sin(\theta_1)\sin(\theta_2)} \\
\fpd{u_2}{s_0} &= \frac{\kappa_2\sin(\theta_0)\sin(2\chi-\theta_1)}{\sin(\theta_1)} + 
\frac{2\sin(\chi) \sin(2\chi-\theta_1-\theta_2) (\kappa_0\ell_1-\sin(\theta_0))}{\ell_2\sin(\theta_1)} \\
&\;\;\;\;\; -\kappa _0 \left(\sin(2\chi-\theta_2)+\frac{\kappa_2 \ell_1\sin(2\chi-\theta_1)}{\sin(\theta_1)}  - \kappa _2\ell_2\cos(\chi) \right)\\
\fpd{u_2}{u_0} &= \frac{ \kappa _2 \ell_2 \cos (\chi)-\sin(2\chi-\theta _2)}{\sin(\theta_0)}+ \frac{2\ell_1\sin(\chi )\sin(2\chi-\theta_1-\theta_2)-\kappa_2\ell_1\ell_2 \sin(2 \chi -\theta _1)}{\ell_2\sin(\theta_0)\sin(\theta_1)}.
% what's below is also equal to \fpd{u_2}{u_0} but is simplified by mathematica above
%\frac{\ell_1\sin(2\chi - \theta_1)\sin(2\chi-\theta_2)-\ell_1\sin(\theta_1)\sin(\theta_2)}{\ell_2\cos(\chi)\sin(\theta_0)\sin(\theta_1)} \\
 %& \;\;\;\;\; +\frac{- \kappa_1\ell_1\sin(2\chi-\theta_2) - \kappa_2\ell_1\sin(2\chi-\theta_1) + \kappa_1\kappa_2\ell_1\ell_2\cos(\chi)}{\sin(\theta_0)\sin(\theta_1)} \\
 %& \;\;\;\;\; + \frac{(\sin(2\chi-\theta_2)-\kappa_2\ell_2\cos(\chi))(\kappa_1\ell_1-\sin(\theta_1))}{\sin(\theta_0)\sin(\theta_1)} \\
\end{align*}
Furthermore, $\det(DT) =1$. 
\end{cor}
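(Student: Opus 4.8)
\section*{Proof proposal}

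The plan is to invoke the chain rule and nothing more conceptually demanding. Since $T = T_2 \circ T_1$, differentiation gives $DT = (DT_2)(DT_1)$ as a product of $2\times 2$ matrices, where $DT_2$ is evaluated at $T_1(s_0,u_0) = (s_1,u_1)$ and $DT_1$ at $(s_0,u_0)$. The determinant claim then drops out immediately from multiplicativity of the determinant together with Proposition \ref{JacCalc}:
$$\det(DT) = \det(DT_2)\det(DT_1) = 1 \cdot 1 = 1.$$
I would dispose of this first, as it requires no further computation. The content of the corollary is therefore entirely in verifying that the entrywise products collapse to the four stated closed forms.

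For the explicit entries I would compute the four products $\tfrac{\partial s_2}{\partial s_0} = \tfrac{\partial s_2}{\partial s_1}\tfrac{\partial s_1}{\partial s_0} + \tfrac{\partial s_2}{\partial u_1}\tfrac{\partial u_1}{\partial s_0}$, and analogously for $\tfrac{\partial s_2}{\partial u_0}$, $\tfrac{\partial u_2}{\partial s_0}$, $\tfrac{\partial u_2}{\partial u_0}$, substituting the eight components from Proposition \ref{JacCalc}. The two entries of the top row of $DT$ simplify cleanly: placing everything over the common denominator $\sin(\theta_0)\sin(\theta_1)\sin(\theta_2)$, the cross terms involving $\kappa_1\ell_1\ell_2\cos(\chi)$ cancel in pairs. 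For instance,
$$\frac{\partial s_2}{\partial u_0} = \frac{\ell_1\big(\sin(2\chi-\theta_1)-\kappa_1\ell_2\cos\chi\big) + \ell_2\cos\chi\,(\kappa_1\ell_1-\sin\theta_1)}{\sin(\theta_0)\sin(\theta_1)\sin(\theta_2)} = \frac{\ell_1\sin(2\chi-\theta_1)-\ell_2\cos\chi\sin\theta_1}{\sin(\theta_0)\sin(\theta_1)\sin(\theta_2)},$$
and the $\tfrac{\partial s_2}{\partial s_0}$ entry behaves identically.

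The hard part will be the bottom row, specifically the appearance of the factor $\sin(2\chi-\theta_1-\theta_2)$ in $\tfrac{\partial u_2}{\partial s_0}$ and $\tfrac{\partial u_2}{\partial u_0}$, which is not present in any single component of Proposition \ref{JacCalc}. This factor is produced by the entry $\tfrac{\partial u_2}{\partial s_1}$, whose leading term carries the numerator $\sin(2\chi-\theta_1)\sin(2\chi-\theta_2)-\sin\theta_1\sin\theta_2$. The key lemma I would establish is the trigonometric identity
$$\sin(2\chi-\theta_1)\sin(2\chi-\theta_2) - \sin\theta_1\sin\theta_2 = \sin(2\chi)\,\sin(2\chi-\theta_1-\theta_2),$$
proved by expanding both products via $\sin A\sin B = \tfrac12[\cos(A-B)-\cos(A+B)]$ and then applying a sum-to-product step to the two surviving cosines. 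Using $\sin(2\chi)=2\sin\chi\cos\chi$, the $\cos\chi$ then cancels against the $\ell_2\cos\chi$ in the denominator, which is precisely the origin of the terms $\tfrac{2\sin\chi\,\sin(2\chi-\theta_1-\theta_2)}{\ell_2}$ in the stated formulas. Once this identity is in hand, the remaining manipulation of the $\kappa_0,\kappa_1,\kappa_2$ terms is bookkeeping, and the two bottom entries assemble into the claimed expressions. I expect this single identity to be the only genuine obstacle; everything else is routine substitution and cancellation.
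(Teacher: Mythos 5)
Your proposal is correct and is exactly the route the paper intends: the corollary follows from Proposition \ref{JacCalc} by the chain rule $DT = DT_2\,DT_1$, with $\det(DT)=1$ by multiplicativity, and the paper leaves the entrywise verification tacit. Your key identity $\sin(2\chi-\theta_1)\sin(2\chi-\theta_2)-\sin\theta_1\sin\theta_2 = \sin(2\chi)\sin(2\chi-\theta_1-\theta_2)$ is valid (it follows from product-to-sum and sum-to-product formulas) and, combined with $\sin(2\chi)=2\sin\chi\cos\chi$, correctly accounts for the $2\sin\chi\,\sin(2\chi-\theta_1-\theta_2)/\ell_2$ terms in the bottom row, while your cancellation of the $\kappa_1\ell_1\ell_2\cos\chi$ cross terms checks out in all four entries.
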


 From this we conclude that $T$ is an area- and orientation-preserving map of the annulus $\mathcal{P}$ and that the Birkhoff coordinates are conjugate. Just as with Birkhoff and magnetic billiards, the map $T$ preserves the symplectic area-form $ds\wedge du = \sin(\theta)ds \wedge d\theta$ on $\mathcal{P}$.

\section{Generating Functions and Twist Maps}
\label{genfun}

%\noindent We start with a tool that can be useful in the search for periodic orbits and for later use with variational methods. 

Twist maps have been studied extensively (\cite{Go}, \cite{Ma2}, \cite{Me}) in the context of dynamics and symplectic geometry. Let $f$ be a symplectic map from the annulus $\R{}/\Z \times \R{}$ to itself. To be a \emph{monotone twist map}, the lift of $f$ to its universal cover $\widehat{f}$ must satisfy the following properties, where $(x^\prime,y^\prime) = \widehat{f}(x,y)$:

\begin{enumerate}[i)]
\item $\widehat{f}(x+1,y) = \tilde{f}(x,y) + (1,0)$; 
\item $\fpd{x^\prime}{y} >0$ (twist condition);
\item $\widehat{f}$ admits a periodic exact symplectic map $G$ called a \emph{generating function}: $$y^\prime dx^\prime - y dx = dG(x,x^\prime).$$ Alternately we may say $y^\prime = \fpd{G}{x^\prime}$ and $y = -\fpd{G}{x}$. \\
\end{enumerate}

%\begin{defn}[\cite{Me}]
%Let $M:(x,y) \mapsto (x^\prime,y^\prime)$ be a symplectic map from the annulus to itself, and suppose $M$ is differentiable. Then $M$ is a \textbf{twist map} (with a twist to the right) if there is a $K$ such that \begin{equation}\label{twistcondition}\left. \dfrac{\partial x^\prime}{\partial y}\right|_x \geq K >0,\end{equation} so that $x^\prime$ is a monotonically increasing function of $y$. An analogous definition holds with reversed inequalities when defining a twist to the left. We call \eqref{twistcondition} the \textbf{twist condition}.
%\end{defn}

In Birkhoff billiards, the billiard map is always a monotone twist map whose generating function is the negative of the Euclidean (chord) distance between successive collisions with the boundary. In the magnetic billiard setting, the magnetic billiard map is not always twist, but when it is the generating function also depends upon the area associated with an arc of a given trajectory which appears as a flux term. It is not surprising that in this problem that has elements of both standard and magnetic billiards, that our generating function contains a combination of these elements.

To better understand when the return map $T$ is a twist map, we turn to the following theorem which we prove in Appendix \ref{twistcondgfpf}. 

\begin{theorem}\label{twistcondgf}%\label{gf}\label{twistcond}
Let $\Gamma(s) = \partial \Omega$ be of class $C^k$, $k \geq 3$, and let $\rho_{min}$ be the minimum radius of curvature of the strictly convex boundary curve $\Gamma(s)$. Then if $\mu< \rho_{min}$ then $T$ is a twist map whose unique generating function (up to an additive constant) is given by $$G(s_0,s_2) = -\ell_1 - |\gamma| + \frac{1}{\mu} \mathcal{S}$$ where $\ell_1$ is the length of the line segment inside $\Omega$, $|\gamma|$ is the length of the circular arc $\gamma$ of Larmor radius $\mu$, and $\mathcal{S}$ is the area inside the circular arc $\gamma$ but outside $\Omega$. 
%\label{twistcond}
\end{theorem}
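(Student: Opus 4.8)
The plan is to prove the theorem in three stages: first show that $T$ is a monotone twist map (which makes $(s_0,s_2)$ a legitimate coordinate system on orbit segments and guarantees the \emph{existence} of a generating function), then identify that generating function with the stated $G$ by a first-variation argument, and finally argue uniqueness up to a constant. Property (i) of the twist definition is immediate, since every quantity entering $T$ ($\ell_i,\theta_i,\kappa_i,\chi$) is built from the $L$-periodic parametrization $\Gamma$, so the lift of $T$ commutes with $s\mapsto s+L$. For the twist condition (ii) I would work directly from the Corollary, where $$\fpd{s_2}{u_0}=\frac{\ell_1\sin(2\chi-\theta_1)-\ell_2\cos(\chi)\sin(\theta_1)}{\sin(\theta_0)\sin(\theta_1)\sin(\theta_2)}.$$ Since $\theta_i\in(0,\pi)$ the denominator is positive, so the twist condition is equivalent to a definite sign of the numerator $N:=\ell_1\sin(2\chi-\theta_1)-\ell_2\cos(\chi)\sin(\theta_1)$. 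The hypothesis $\mu<\rho_{min}$ enters here through the uniform bound $\mu\kappa_i=\mu/\rho(s_i)\le\mu/\rho_{min}<1$, together with $\ell_2=2\mu\sin(\chi)$: this is what forces the arc to be the short arc with $\cos(\chi)>0$ and excludes the tangency and supplementary-$\chi$ degeneracies that would let $N$ change sign. Equivalently, since along the orbit $\partial G/\partial s_0=-u_0=-\partial\ell_1/\partial s_0$, one has $G_{s_0s_2}=-\tfrac{\sin(\theta_0)\sin(\theta_1)}{\ell_1}\,\fpd{s_1}{s_2}$, and the twist reduces to the geometric monotonicity $\partial s_1/\partial s_2>0$ (the exit point advances with the re-entry point). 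I expect this sign estimate to be the main obstacle, as it is the one place where $\mu<\rho_{min}$ is genuinely used and it needs a real geometric argument rather than bookkeeping.

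Once twist holds, $(s_0,s_2)$ parametrize orbit segments and a generating function exists; it remains to show it equals $G=-\ell_1-|\gamma|+\mu^{-1}\mathcal S$. I would introduce the broken-path function $$\widetilde G(s_0,s_1,s_2)=-\ell_1(s_0,s_1)-|\gamma|(s_1,s_2)+\tfrac{1}{\mu}\mathcal S(s_1,s_2),$$ in which $P_1=\Gamma(s_1)$ is a \emph{free} intermediate boundary point (the segment $P_0P_1$ need not be tangent to the $\mu$-arc at $P_1$), and then set $G(s_0,s_2)=\widetilde G(s_0,s_1^\ast,s_2)$ for the physical exit point $s_1^\ast$. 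The key structural point is that $s_1^\ast$ is a \emph{critical point} of $\widetilde G$ in $s_1$: solving $\partial\widetilde G/\partial s_1=0$ returns exactly the continuity of the velocity direction at $P_1$ (the chord direction $P_0P_1$ equals the arc's tangent at $P_1$), which is the defining property of the trajectory. By the envelope theorem the $s_1$-dependence then drops out of $\partial G/\partial s_0$ and $\partial G/\partial s_2$.

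For the two remaining partials I use $|\gamma|=2\mu\chi$ with $\sin(\chi)=\ell_2/(2\mu)$, which gives $\partial|\gamma|/\partial s_2=(\cos\chi)^{-1}\,\partial\ell_2/\partial s_2$, together with the first variation of the flux term. Writing $\mathcal S$ as the circular segment of $\gamma$ plus the boundary region $\mathcal A$ and applying Green's theorem $\mathcal A=\tfrac12\oint(x\,dy-y\,dx)$, a direct computation gives $\mu^{-1}\partial\mathcal A/\partial s_2=-\sin(\chi)\,(\hat e_2\times\bs t(s_2))$ while the segment contributes $\sin^2(\chi)(\cos\chi)^{-1}\,\partial\ell_2/\partial s_2$, where $\hat e_2$ is the unit chord vector $P_1\to P_2$. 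The $\partial\ell_2/\partial s_2$ terms then collapse via $-1+\sin^2\chi=-\cos^2\chi$, yielding $$\fpd{G}{s_2}=-\cos(\chi)\,(\hat e_2\cdot\bs t(s_2))-\sin(\chi)\,(\hat e_2\times\bs t(s_2))=-\cos(\phi-\chi),$$ with $\phi=\angle(\hat e_2,\bs t(s_2))$. Since the tangent-chord angle of $\gamma$ is $\chi$, the re-entry velocity makes angle $\theta_2=\phi-\chi$ with $\bs t(s_2)$, so $\partial G/\partial s_2=-\cos\theta_2=u_2$; the analogous but simpler Birkhoff computation gives $\partial G/\partial s_0=-\partial\ell_1/\partial s_0=-u_0$. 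These are precisely the relations $y'=\partial G/\partial x'$, $y=-\partial G/\partial x$ of property (iii).

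The form of $G$ is not a guess: up to the normalizing factor $\tfrac12 m|v|$ it is the negative of the action $\int\mathcal L\,dt$ along the orbit segment, the flux term $e\int_\gamma\ip{\dot q}{\mathbb A(q)}$ producing the geometric area $\mathcal S$ with coefficient $\mu^{-1}=|eB|/(m|v|)$; this is the principled reason the three pieces appear with exactly those weights, and I would use it to motivate the ansatz before the direct verification above. Finally, for uniqueness, if $G$ and $G'$ both generate $T$ then $\partial(G-G')/\partial s_0=\partial(G-G')/\partial s_2=0$ throughout the connected region where $(s_0,s_2)$ are coordinates, so $G-G'$ is constant, giving the generating function up to an additive constant and completing the proof.
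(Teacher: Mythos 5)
Your second half --- the identification of $G$ --- is correct and is in substance the paper's own verification: the paper likewise splits $\mathcal{S} = Area(\mathcal{A}\cup\mathcal{S}) - \mathcal{A}$, computes $\frac{\partial \mathcal{A}}{\partial s_2} = \frac{1}{2}\ell_2\sin(\chi-\theta_2)$, and collapses everything to $\frac{\partial G}{\partial s_2} = -\sin(\chi)\sin(\chi-\theta_2)-\cos(\chi)\cos(\chi-\theta_2) = -\cos(\theta_2) = u_2$, together with the standard-billiard computation $\frac{\partial G}{\partial s_0} = -u_0$; your reduced-action motivation is exactly the paper's remark following the theorem. In fact your broken-path function $\widetilde G(s_0,s_1,s_2)$ with the envelope argument at the critical exit point $s_1^*$ is a gain in rigor over the paper, which differentiates as though $\ell_1$, $\mathcal{A}$, $\chi$ carried no hidden $s_1(s_0,s_2)$ dependence; your observation that $\partial\widetilde G/\partial s_1 = 0$ encodes velocity continuity at $P_1$ is precisely what licenses that. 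The uniqueness argument is fine.

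The genuine gap is the twist condition, and your proposed mechanism for it is false. It is not true that $\mu<\rho_{min}$ forces the short arc with $\cos(\chi)>0$: in the unit disk with any $0<\mu<1$, the paper's explicit formula $\chi = \theta + \arcsin\bigl(\mu\sin(\theta)/\sqrt{1+\mu^2-2\mu\cos(\theta)}\bigr)$ gives $\chi>\pi/2$ whenever $\theta\geq\pi/2$, and as $u_0\to 1$ the arc is nearly the full Larmor circle, so $\chi\to\pi$; this is also forced by Proposition \ref{rotnum}(1), which says the image of a fixed-$s_0$ fiber wraps once around the annulus, so $\chi$ must sweep all of $(0,\pi)$ even in the regime $\mu<\rho_{min}$ (nor does that regime exclude the supplementary-$\chi$ pairs of Figure \ref{magarcdetailsuppchi}b). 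Consequently the sign of the numerator $N=\ell_1\sin(2\chi-\theta_1)-\ell_2\cos(\chi)\sin(\theta_1)$ cannot be read off from positivity of $\cos(\chi)$: near $\chi=\pi$ one has $\sin(2\chi-\theta_1)\approx-\sin(\theta_1)<0$ while $-\ell_2\cos(\chi)\sin(\theta_1)>0$, so the two terms compete and a genuine estimate is required. Your alternative reduction $G_{s_0s_2} = -\frac{\sin(\theta_0)\sin(\theta_1)}{\ell_1}\,\frac{\partial s_1}{\partial s_2}$ is correct, but you then explicitly concede the needed monotonicity $\frac{\partial s_1}{\partial s_2}>0$ as ``the main obstacle'' and never prove it --- and that monotonicity \emph{is} the twist assertion, the one place where $\mu<\rho_{min}$ must genuinely be used (twist demonstrably fails for $\rho_{max}<\mu$ by Proposition \ref{rotnum}(3)). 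For comparison, the paper does not prove this step inline either: it calls it a small exercise in geometry and trigonometry and defers the full proof to the appendices of \cite{G}. So your proposal matches the paper on the generating-function half but leaves the theorem's first and hardest assertion unestablished; as a self-contained proof it is incomplete.
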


\begin{remark} This generating function need not be unique. But in general we can think of the generating function as the reduced action along a solution $\nu$ to the Euler Lagrange equations which connects $P_0$ to $P_2$. See \cite{B} and \cite{BK}. 
\end{remark}

%\noindent We make note of the following properties of generating functions and their relationship to twist maps:
%
%\begin{prop}[\cite{Me}, \cite{Go}]\label{genfunc}
%\noindent 
%\begin{enumerate}
%\item If $T$ is an area preserving twist map, it admits a generating function, unique up to an additive constant, given by $$G(s_0, s_2) = \int_{\widehat{\gamma}}^{(s_0,s_2)} u_2(\xi, \eta)d\eta - u_0(\xi,\eta)d\eta$$ where $\widehat{\gamma}$ is an arbitrary path in $S^1 \times S^1$ that ends at $(s_0,s_2)$ (in fact, it can be shown that this integral is path-independent), so that $dG$ is an exact one-form. 
%\item If $G$ is $C^2$, the map $T$ generated by $G$ is always area-preserving. It is a \textit{twist map} if $\displaystyle \spd{G}{s_0}{s_2}$ maintains its sign and is always nonzero. 
%\item If $G$ is $C^2$, $u$ is a constant of motion if and only if $G(s_0,s_2) = g(s_2-s_0)$ for some function $g$. 
%\end{enumerate}
%\end{prop}

 An interesting property of this generating function (and this problem in general) is as follows: In the high magnetic field limit (i.e. $\mu \to 0$), both $|\gamma| \to 0$ and $\frac{1}{\mu}\mathcal{S} \to 0$. This is because $|\gamma| = O(\mu)$ and $\mathcal{S} = O(\mu^2)$. So as $\mu \to 0$, our generating function approaches the standard billiard generating function, and our return map approaches the standard billiard map for billiards inside a convex set. \\

We can decompose $G$ into non-magnetic and magnetic parts, 
\begin{align*}
G(s_0,s_2) &= \left[-\ell_1 - \frac{1}{\mu}\mathcal{A}\right] + \left[-|\gamma| + \frac{1}{\mu} Area(\mathcal{A} \cup \mathcal{S})\right] \\
&= E(s_0,s_2) + F_\mu(\chi(s_0,s_2)).
\end{align*}
Here $Area(\mathcal{A} \cup \mathcal{S})$ is the area of $\mathcal{A} \cup \mathcal{S}$, $E(s_0,s_2)$ has quantities $\ell_1$ and $\mathcal{A}$ which are not directly dependent upon the magnetic field, %(see Figure \ref{stdpic1}), 
and $F_\mu$ is dependent upon the magnetic field and can be written as 
$$F_\mu(\chi(s_0,s_2)) = -\mu( \chi + \sin(\chi)\cos(\chi)).$$  
We can also write $F_\mu$ as a function of $\ell_2$, though with caveats:  
$$F_\mu(\ell_2(s_0,s_2)) = -\mu\arccos\left(\pm \sqrt{1-\frac{\ell_2^2}{4\mu^2}}\right) - \pm \frac{\ell_2}{2}\sqrt{1-\frac{\ell_2^2}{4\mu^2}},$$ where $(+)$ is used if $0 < \chi \leq \frac{\pi}{2}$ and $(-)$ is used if $\frac{\pi}{2}< \chi < \pi$. 

\section{Periodic Orbits}\label{PerOrbs}

The study of periodic orbits and their properties is a fundamental part of any dynamical system. In billiards, Birkhoff used Poincar{\'e}'s last geometric theorem to show the existence of infinitely many distinct orbits (\cite{Bir}). One way to distinguish distinct periodic orbits from one another is by the \emph{rotation number}. The rotation number of a periodic orbit is the rational number $$\frac{m}{n} = \frac{\text{winding number}}{\text{minimal period}} \in [0,1]$$ where the winding number $m>1$ is computed with respect to the orientation of $\partial \Omega$ induced by the parametrization $\Gamma(s)$. A periodic orbit with rotation number $\frac{m}{n}$ is sometimes referred to as having \emph{frequency} $(m,n)$.

%\begin{defn}
%The \textbf{orbit} of the point $(s_0,u_0)$ is the biinfinite sequence $$\{\ldots, (s_{-2},u_{-2}), (s_0,u_0), (s_2,u_2), \ldots\}$$ where $(s_{2k},u_{2k}) = \widehat{T}(s_{2k-2},u_{2k-2})$. Each point is given by successive iterates of $\widehat{T}$. 
%\end{defn} 

%\begin{lemma}[\cite{Go}]
%Let $\widehat{T}$ be  the lift of a twist map $T$ of the annulus and let $G(s_0,s_2)$ be its generating function. There is a one-to-one correspondence between orbits $\{(s_{2k},u_{2k}) = \widehat{T}^k(s_0,u_0)\}_{k \in \Z}$ of $\widehat{T}$ and sequences $\{s_{2k}\}_{k \in \Z}$ satisfying
%$$\partial_1G(s_{2k},s_{2k+2}) + \partial_2 G(s_{2k-2},s_{2k}) = 0 \;\;\;\;\;\; \forall k \in \Z,$$ where $\partial_i$ denotes partial differentiation with respect to the $i^{th}$ component. The correspondence is given by $$u_{2k} = -\partial_1 G(s_{2k}, s_{2k+2}).$$ 
%\end{lemma}
%
%\noindent If we define the $n$-point generating function by
%$$G^{(n)}(s_0,s_2, \ldots, s_{2n-2}) := G(s_0, s_2) + G(s_2, s_4) + \cdots +G(s_{2n-2}, s_0),$$ %also definted as W(s_0, \ldots, s_n)
%often also called the \textit{action} of the sequence of points $\{s_0, s_2, \ldots, s_{2k-2}\}$, then the Critical Action Principle tells us that $\{s_0, s_2, \ldots, s_{2k-2}\}$ is the projection of an orbit segment of $\widehat{T}$ onto the $s$-axis if and only if it is a critical point of $G^{(n)}$ (of course, restricted to the subspace of sequences $\{w_N, \ldots, w_M\}$ with fixed endpoints, $w_N = s_0$ and $w_M = s_{2n-2}$). 

A continuous orientation-preserving homeomorphism of the circle $S^1$ to itself has a well-defined rotation number, defined modulo 1, when the circle is normalized to have perimeter 1. When a lift to $\R{}$ of this homeomorphism is chosen, this rotation number is now a real number. By the definition of a twist map, $T$ sends boundary circles to boundary circles, so the lifted homeomorphism has a bottom and top rotation number, $\omega_-$ and $\omega_+$. Then the rotation numbers belong to an interval $\mathcal{I}(\widehat{T}) = [\omega_-,\omega_+]$ provided $\omega_- < \omega_+$. In particular, if the map is the identity on the boundary circles then necessarily $\omega_-,\omega_+ \in \Z$. 

With this idea in mind, we can extend our definition of rotation number of the orbit $\{(s_{2k},u_{2k})\}_{k \in \Z}$ to include irrational numbers by writing $$\omega = \frac{1}{L}\limit{k}{\infty}{\frac{s_k}{k}},$$ provided this limit exists. We note that in the context of Birkhoff billiards and inverse magnetic billiards, this definition agrees with the geometric definition in terms of winding number given above. \\

\begin{figure}[t]
\begin{tabular}{ l l } 
(a) \includegraphics[width=0.40\textwidth]{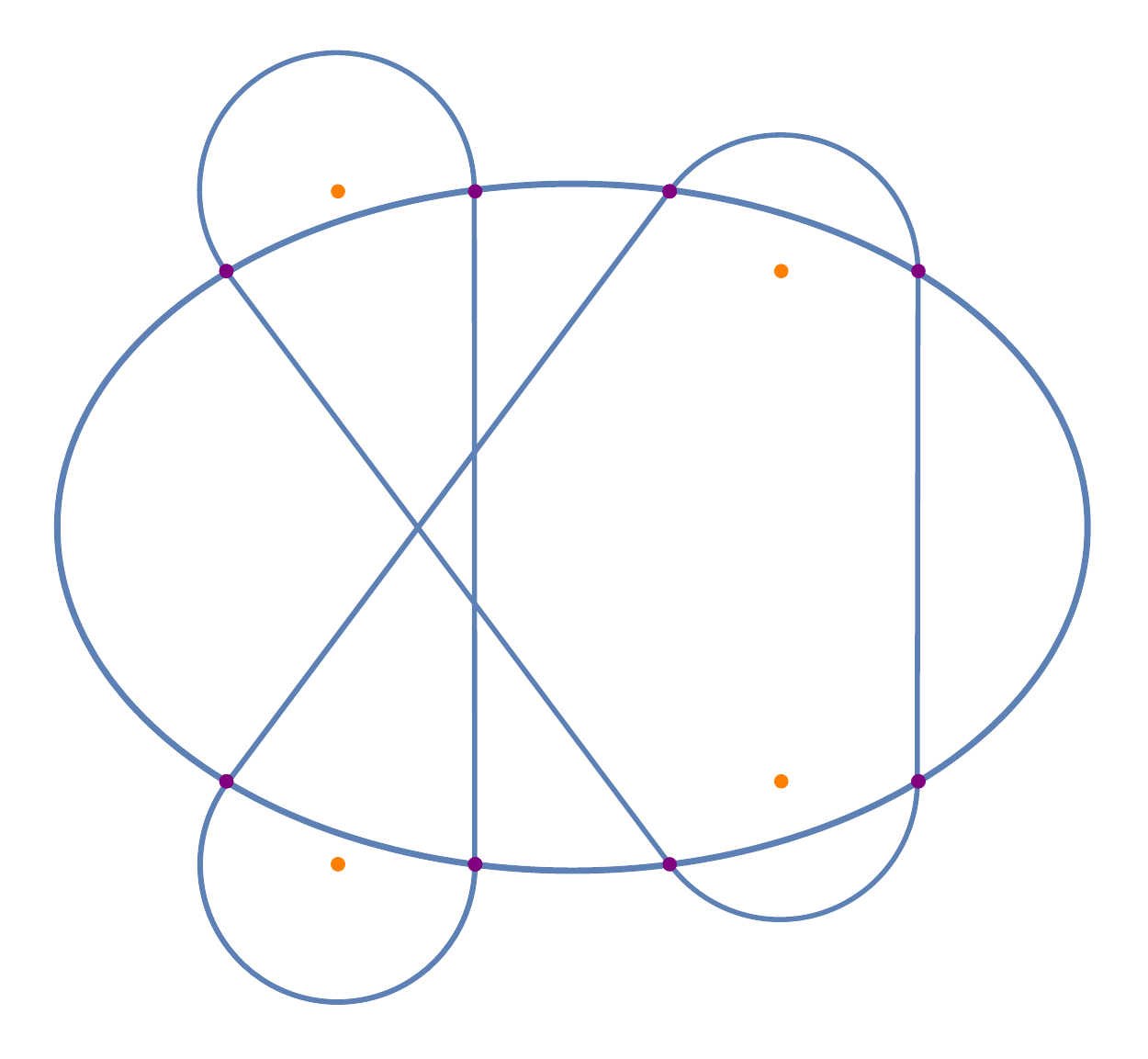} & (b) \includegraphics[width =0.45\textwidth]{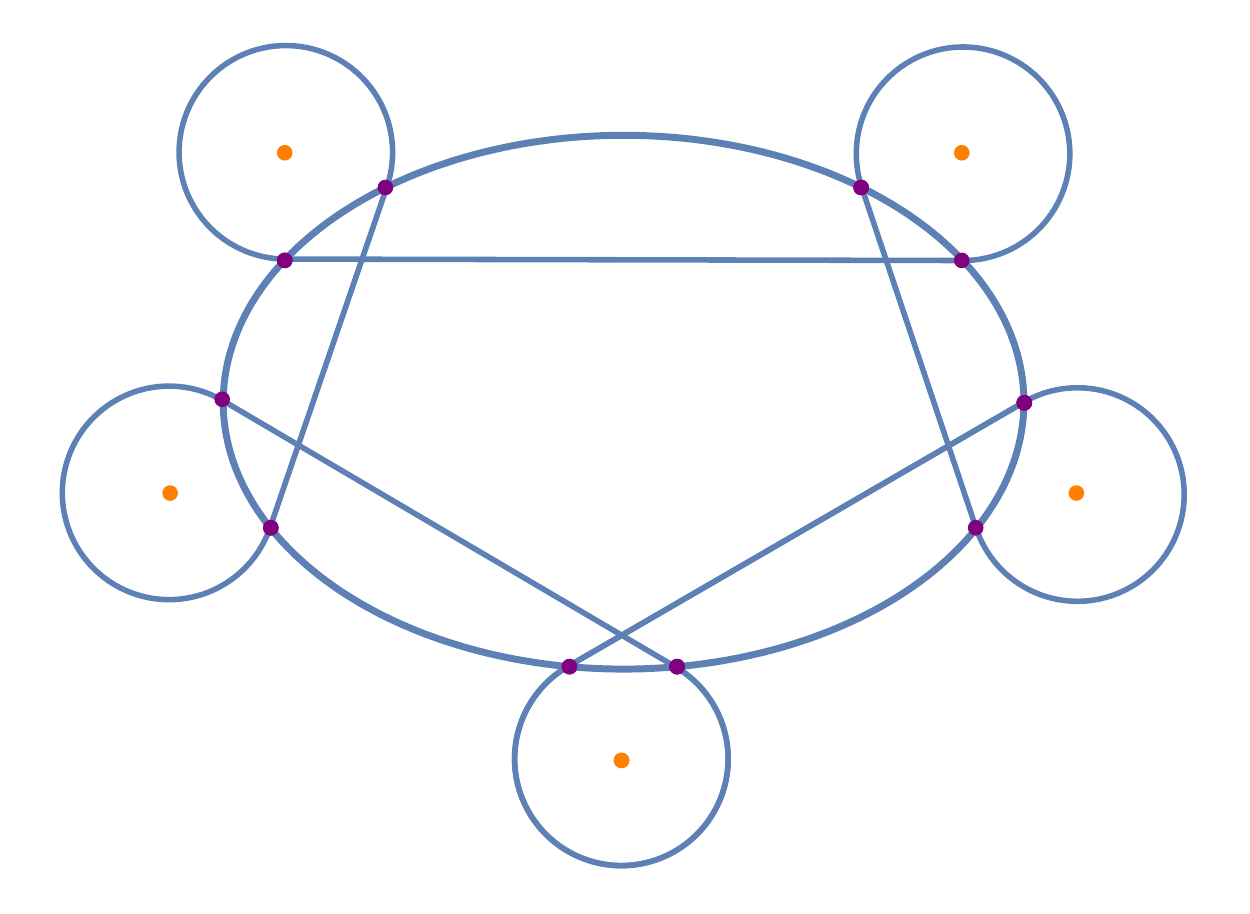} \\
\end{tabular}
\caption{A $(2,4)$ (a) and $(4,5)$ (b) periodic orbit in an ellipse with $\mu < \rho_{min}.$ The centers of the Larmor circles are marked in orange and the points $P_i$ are in dark purple.}
%\caption{(a) A (2,4) periodic orbit for an ellipse with semi-major axis 3, semi-minor axis 2, and $\mu= 4/5 < \rho_{min} = 4/3$;%, and $(s_0, u_0) \approx (1.3796,-0.491598)$;   %\theta \approx 1.056872 
%(b) A (4,5) periodic orbit for an ellipse with semi-major axis 3, semi-minor axis 2, and $\mu= 1/2 < \rho_{min}$%, and $(s_0, u_0) \approx (0,0.501393)$. 
%The centers of the Larmor circles are marked in orange and the points $P_i$ are in dark purple.} %\theta \approx 125.71 degrees}  
\label{twoellipseexamples}
\end{figure}

One particularly useful application of a generating function is in the search of periodic orbits. %It will be convenient to lift the periodic variable $s$ from $\R{}/L\Z$ to $\R{}$, and 
We will denote the lift of the return map $T$ as $\widehat{T}$. Theorems about the existence of periodic orbits for continuous area-preserving twist maps can be attributed to Poincar{\'e} and Birkhoff, Aubry, Mather, and Meiss and MacKay. These theorems describe the existence of $(m,n)$ periodic orbits that are ``maximizing" and ``maximin" when considering the quantity $\sum_{j=k}^{l-1} G(s_{2j}, s_{2j+2})$ along with characterizations of irrational $\omega \in \mathcal{I}(\widehat{T})$ and their relationship to quasiperiodic orbits.   See, for example, \cite{Me} or the summary from section 4 of \cite{BK} for more details. 

We take a similar approach below, and can apply the theorems Poincar{\'e}, Birkhoff, Aubry, and Mather to the map $T$ while making qualitative comments about the behavior of $T$. 

\begin{prop}
Consider the three curvature regimes:
\begin{enumerate}
\item If $\mu < \rho_{min}$, the function $s_2(s_0, u_0)$ is strictly monotonic in $u_0$ and $T$ is a twist map. For fixed $s_0$ the curve $\{T(s_0,u_0): -1 <u_0 <1 \}$ rotates once around phase space (see Figure \ref{phasespaces}a) with $\limit{u_0}{\pm 1}{T(s_0,u_0)} = (s_0,u_0)$. Therefore $\mathcal{I}(\widehat{T}) = [0,1]$. 
\item If $\rho_{min} < \mu < \rho_{max}$, then the map may be discontinuous due to the Larmor circle becoming tangent to the boundary. The function $s_2(s_0,u_0)$ is not necessarily monotonic in $u_0$ and is not a twist map (see Figure \ref{phasespaces}c). It is still true that  $\limit{u_0}{1}{T(s_0,u_0)} = (s_0,u_0)$, but not necessarily when $u_0 \to -1$. 
\item If $\rho_{max} < \mu$, then $s_2(s_0,u_0)$ is initially decreasing in $u_0$ and then begins to increase again (see Figure \ref{phasespaces}e). We still have $\limit{u_0}{\pm 1}{T(s_0,u_0)} = (s_0,u_0)$, which implies that there are exactly two distinct trajectories with equal $\chi$ for a given $s_0$, $s_2$. %Thus we expect $\mathcal{I}(\widehat{T}) \subset [0,1]$
\end{enumerate}
\label{rotnum}
\end{prop}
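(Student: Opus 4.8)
The plan is to fix $s_0$ and study the single-variable function $u_0 \mapsto (s_2,u_2) = T(s_0,u_0)$ on $(-1,1)$, reading off its qualitative shape from two independent inputs: the sign of $\fpd{s_2}{u_0}$, for which the Corollary supplies the closed form $$\fpd{s_2}{u_0} = \frac{\ell_1\sin(2\chi-\theta_1) - \ell_2\cos(\chi)\sin(\theta_1)}{\sin(\theta_0)\sin(\theta_1)\sin(\theta_2)},$$ and the two grazing limits $u_0 \to \pm 1$, which I would treat geometrically. Since the chord inside $\Omega$ is straight, the exit velocity at $P_1$ equals the entry direction $v_0$, so the Larmor arc issues from $P_1$ with velocity $v_0$ and center $P_1 + \mu\,\mathbb{J} v_0$. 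As $u_0 \to -1$ ($\theta_0 \to 0$, $v_0 \to \bs{t}$) this center limits to $P_0 + \mu\bs{n}$, on the inner side of $\partial\Omega$, while as $u_0 \to 1$ ($\theta_0 \to \pi$, $v_0 \to -\bs{t}$) it limits to $P_0 - \mu\bs{n}$, on the outer side. The organizing principle for all three parts is Blaschke's rolling theorem: a disk of radius $\mu$ rolls freely inside $\Omega$ when $\mu \le \rho_{min}$, and $\Omega$ is contained in a disk of radius $\mu$ tangent to $\partial\Omega$ at each boundary point when $\mu \ge \rho_{max}$.

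The backward grazing limit $u_0 \to 1$ is the easy one: the limiting Larmor circle is tangent to $\partial\Omega$ at $P_0$ with its center on the outer side, so $\Omega$ and the Larmor disk lie on opposite sides of the common tangent line at $P_0$ and meet only there. The particle therefore traverses essentially the whole circle and returns to $(s_0,u_0)$, and this argument is regime-independent, giving $\limit{u_0}{1}{T} = (s_0,u_0)$ in all three parts. For the forward limit $u_0 \to -1$ the center sits at $P_0 + \mu\bs{n}$ on the inner side, and identity holds precisely when the corresponding circle meets $\partial\Omega$ only at $P_0$. In regime (1) this follows from inner rolling (the disk lies inside $\Omega$, so the outside excursion shrinks to a point), and in regime (3) from outer rolling (the disk with this \emph{exact} center contains $\Omega$, so the particle loops once around it); in regime (2) neither holds and the circle generically re-crosses $\partial\Omega$ at a far point, so $P_2 \not\to P_0$. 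This is precisely the asymmetry recorded in parts (2) and (3), and the simultaneous failure of both rolling properties is also what lets the Larmor circle become tangent to $\partial\Omega$ at interior instants as $u_0$ varies, producing the discontinuities in part (2).

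For part (1) the monotonicity is immediate: Theorem \ref{twistcondgf} gives that $T$ is a monotone twist map when $\mu < \rho_{min}$, which is the statement $\fpd{s_2}{u_0} > 0$, so $s_2(s_0,\cdot)$ is strictly increasing. Combined with the two identity endpoints, the lifted curve $u_0 \mapsto s_2$ increases strictly from $s_0$ to $s_0 + kL$ for some positive integer $k$; I would pin $k=1$ by a homotopy argument, since $T$ is a boundary-fixing homeomorphism for every $\mu \in (0,\rho_{min})$ and degenerates to the standard billiard map as $\mu \to 0^+$ (as noted after Theorem \ref{twistcondgf}), for which the curve winds exactly once, and the integer $k$ cannot jump along this connected family. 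Hence the curve rotates once, the two boundary rotation numbers are $0$ and $1$, and $\mathcal{I}(\widehat{T}) = [0,1]$. For part (3), with both endpoints again equal to $s_0$ but the twist property gone, I would evaluate the sign of the displayed derivative near the two ends — negative as $u_0 \to -1$ and positive as $u_0 \to 1$ — and argue it vanishes exactly once, so $s_2(s_0,\cdot)$ decreases and then increases. The resulting U-shape is two-to-one onto its range, which yields two distinct $u_0$, hence two trajectories, for each attainable pair $(s_0,s_2)$; a short symmetry computation then identifies their turning angles $\chi$.

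The main obstacle I anticipate is twofold. Conceptually, the grazing-limit analysis must be carried out with scrupulous attention to the fixed counterclockwise chirality and to the inner-versus-outer placement of the Larmor center, since it is exactly this that breaks the symmetry between $u_0 \to 1$ and $u_0 \to -1$; invoking the Blaschke rolling dichotomy correctly is the heart of the argument and what makes the three cases clean. Technically, the sign analysis of $\fpd{s_2}{u_0}$ in parts (2) and (3) is delicate: the numerator $\ell_1\sin(2\chi-\theta_1) - \ell_2\cos(\chi)\sin(\theta_1)$ couples chord lengths and angles that themselves vary with $u_0$, and showing that it changes sign \emph{exactly once} in regime (3) (rather than several times), together with controlling the jumps in regime (2) so that they occur only at genuine tangencies, and confirming the claimed equality of the two values of $\chi$, is where the real work lies.
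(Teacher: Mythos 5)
Your proposal is essentially sound, but it is worth noting that the paper does not actually prove Proposition \ref{rotnum} at all: it is stated as a qualitative observation, supported by the numerics of Figure \ref{phasespaces} and by analogy with section 5.1 of \cite{BK}, with the endpoint behavior later corroborated rigorously by the Taylor expansions of Propositions \ref{BKMagMap} and \ref{taylorexp}. So your write-up supplies an argument where the paper supplies an assertion, and your route is a legitimate one. Your Blaschke rolling dichotomy is, in substance, the same circle-versus-curvature comparison the paper invokes through the $\mu$-intersection property (the Corollary to Lemma 3 in Appendix D of \cite{BK}, valid exactly when $\mu<\rho_{min}$ or $\rho_{max}<\mu$), and your grazing-limit signs check out against the paper's expansions: near $u=-1$ the coefficient $\frac{2}{\kappa(1-\mu\kappa)}$ is positive in regime (1) and negative in regime (3), while near $u=1$ the coefficient $-\frac{2}{\kappa(1+\mu\kappa)}$ never degenerates, which is precisely your regime-independent outer-tangency limit. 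Your homotopy-in-$\mu$ argument pinning the winding number $k=1$ in part (1) is a genuinely different and cleaner device than anything in the paper, which in effect reads the single rotation off Figure \ref{phasespaces}a; it buys you a proof of $\mathcal{I}(\widehat{T})=[0,1]$ that does not rely on the picture, at the modest cost of verifying continuity of the family $T_\mu$ up to the boundary of $\mathcal{P}$ uniformly as $\mu\to0^+$.

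Two soft spots remain, both of which you flag honestly but neither of which you close. First, in part (3) you need $\fpd{s_2}{u_0}$ to vanish \emph{exactly} once, and the numerator $\ell_1\sin(2\chi-\theta_1)-\ell_2\cos(\chi)\sin(\theta_1)$ gives no obvious single-crossing structure; note, however, that the proposition as stated only claims ``initially decreasing, then begins to increase,'' which your endpoint sign computation already delivers, and the two-to-one conclusion for pairs $(s_0,s_2)$ follows from the endpoint limits and the intermediate value theorem for $s_2$ values near $s_0$ without a global single-minimum claim. Second, the assertion that the two trajectories share \emph{equal} (not supplementary) $\chi$ is deferred to ``a short symmetry computation'' that you do not exhibit; since this is exactly the point where regime (3) differs from the supplementary-$\chi$ phenomenon of Figure \ref{magarcdetailsuppchi}b, it deserves an actual argument (e.g., via $\sin(\chi)=\frac{\ell_2}{2\mu}$ together with the fact that for $\rho_{max}<\mu$ the reentry arc exceeds a semicircle in both branches, forcing both angles into the same half of $(0,\pi)$). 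As it stands your proposal proves parts (1) and (2) and the endpoint claims of (3) at the paper's own level of rigor or above, with the equal-$\chi$ identification the one genuinely unfinished step.
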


%\begin{theorem}
%Consider the three curvature regimes. 
%If $\mu < \rho_{min}$, the function $s_2(s_0, u_0)$ is strictly monotonic in $u_0$, and hence $T$ is a twist map. The curve $\{T(s_0,u_0): -1 <u_0 <1 \}$ rotates once around the phase cylinder (see Figure \ref{phasespaces}.a) with $\limit{u_0}{\pm 1}{T(s_0,u_0)} = (s_0,u_0)$. Therefore $\mathcal{I}(\widehat{T}) = [0,1]$. 
%\label{rotnum}
%\end{theorem}

\begin{figure}[hp]
\begin{tabular}{ l l }
(a) \includegraphics[width=0.4\textwidth]{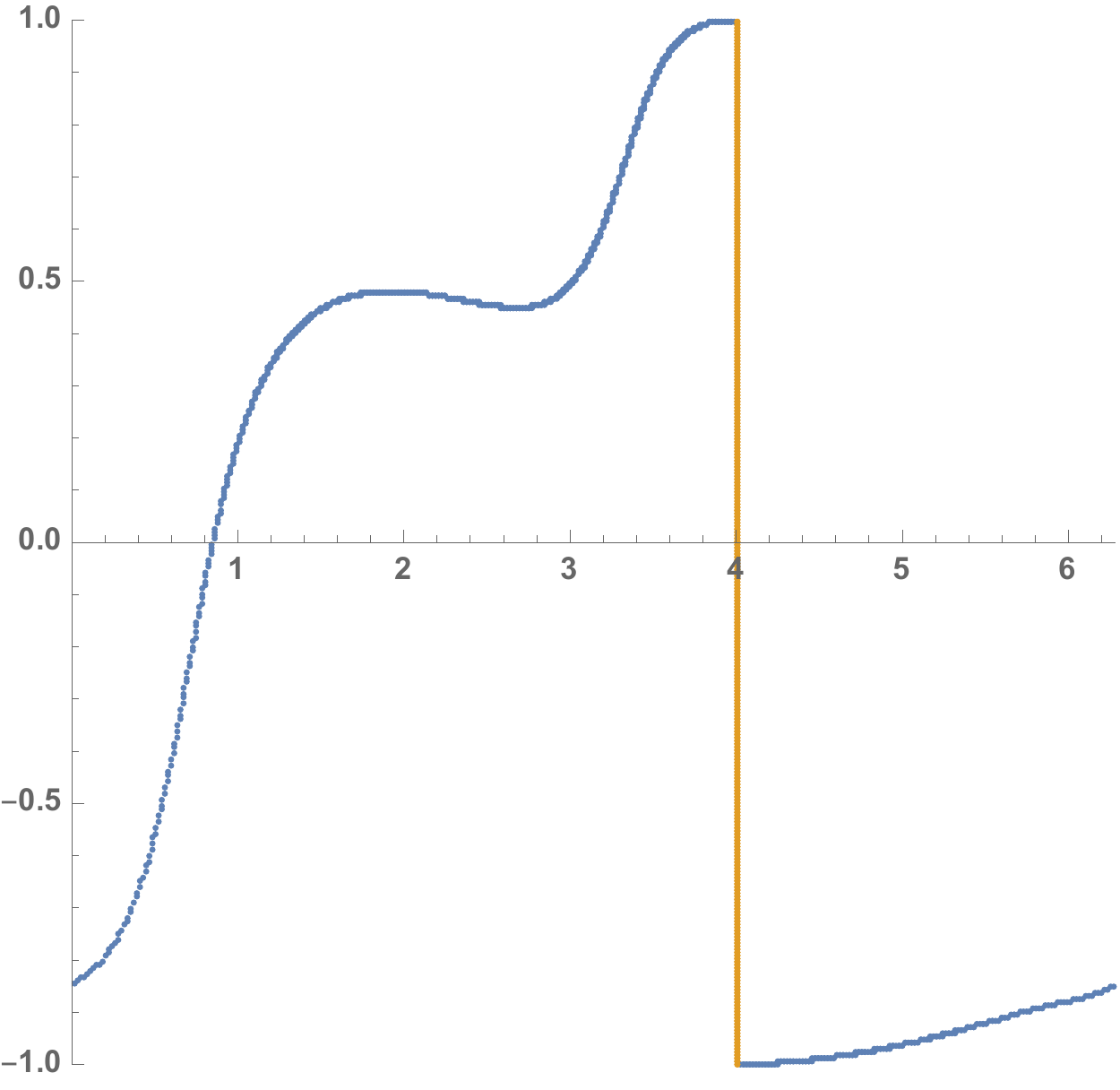} & (b) \includegraphics[width=0.42\textwidth]{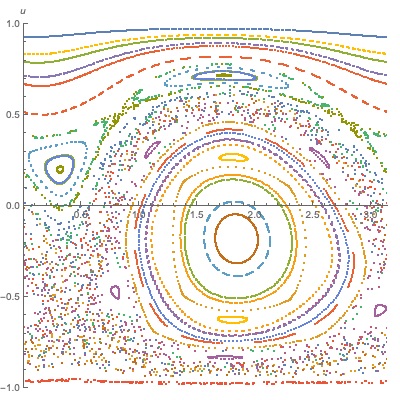} \\
(c) \includegraphics[width=0.4\textwidth]{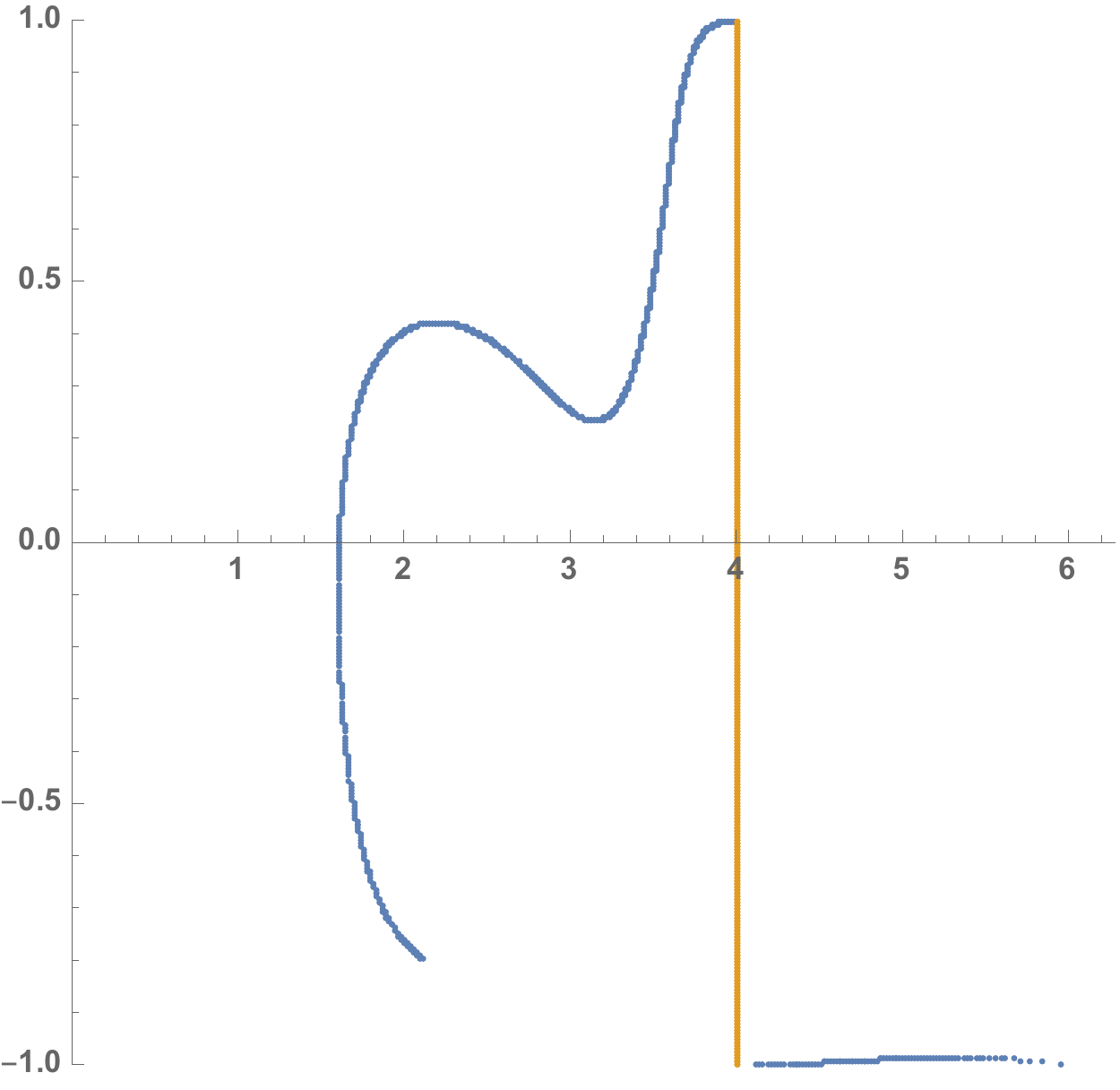} & (d) \includegraphics[width=0.42\textwidth]{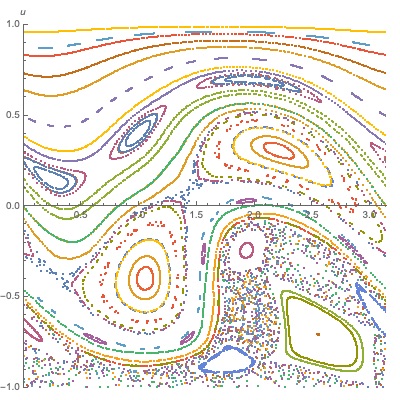} \\
(e) \includegraphics[width=0.4\textwidth]{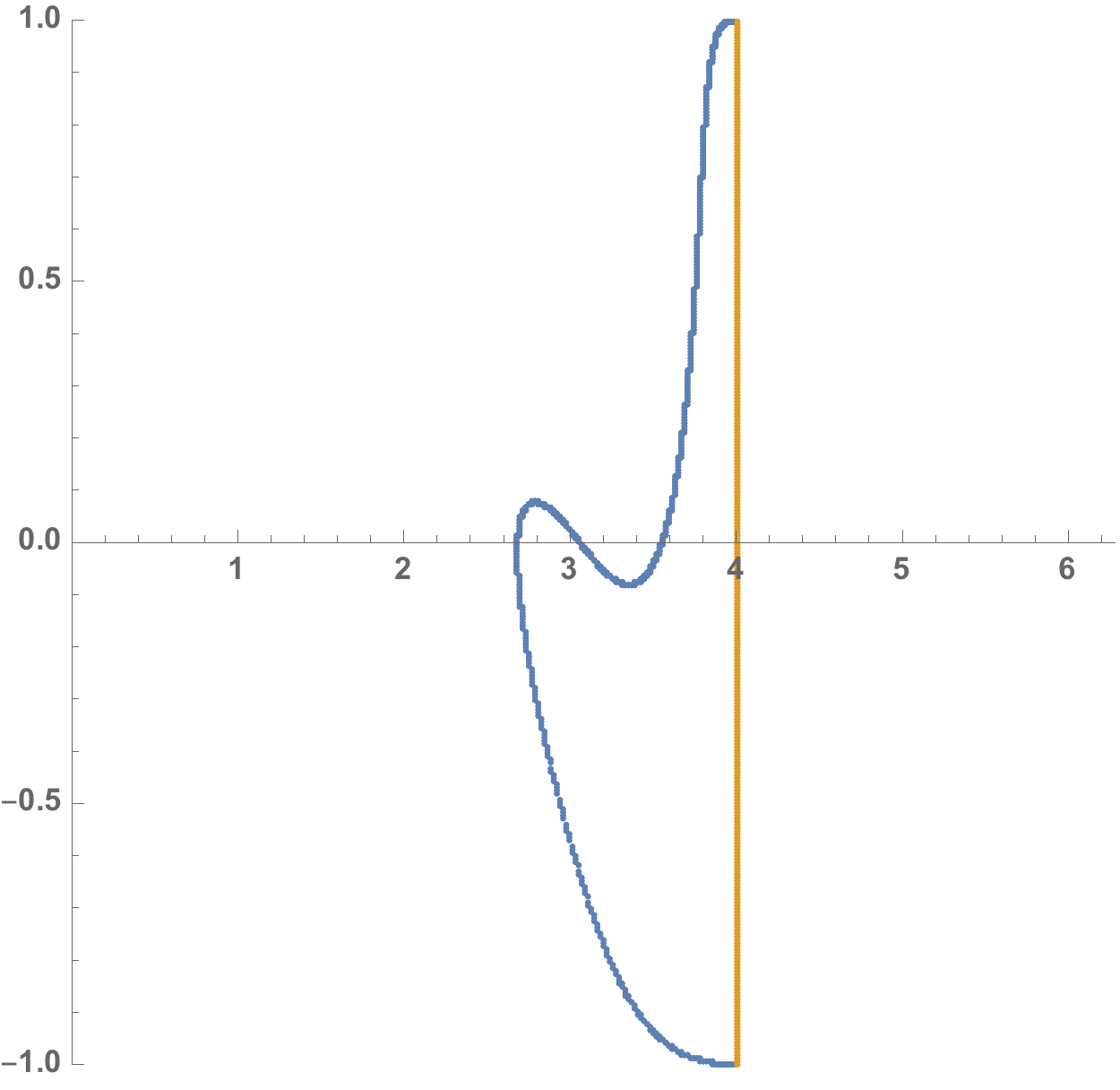} & (f) \includegraphics[width=0.42\textwidth]{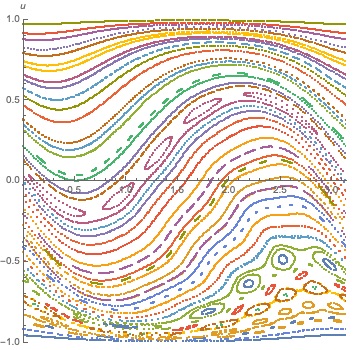}
\end{tabular}
\caption{Structure of half the phase space in the $(\phi,u)$-plane for an ellipse where horizontal axis is $\phi$,  the polar angular parameter used in place of arc length, $s$. The vertical line $\phi = \phi^*$ is shown with its image under $T$ in the left column, $-1 \leq u \leq 1$ for: (a) $\mu < \rho_{min}$; (c) $\rho_{min}< \mu < \rho_{max}$; (e) $\rho_{max}< \mu$. The right column is half of a typical phase portrait, $0 \leq \phi \leq \pi$, of an ellipse for: (b) $\mu < \rho_{min}$; (d) $\rho_{min}< \mu < \rho_{max}$; (f) $\rho_{max}< \mu$. }
\label{phasespaces}
\end{figure}

 This proposition is very similar to the qualitative behavior of magnetic billiards (c.f. section 5.1 of \cite{BK}). In particular, part (1) tells us that for any convex set with smooth boundary and nonvanishing curvature, periodic orbits of every rational frequency $\omega = \frac{m}{n}$ exist, and the earlier summary gives us information about rational and irrational orbits in $\mathcal{I}(\widehat{T})$.

\section{When $\partial\Omega$ is an Ellipse}\label{OmegaIsEllipse}

 We take a quick detour and consider the case when $\partial\Omega$ is an ellipse. Consider the parametrization of $\partial\Omega$ as $$ \bs{x}(\phi) = (\lambda \cos(\phi), \sin(\phi)), \;\;\;\;\; \frac{ds}{d\phi} = C(\phi) = \sqrt{\lambda^2\sin^2(\phi) + \cos^2(\phi)}.$$ Without loss of generality we may assume that the parameter $\lambda \geq 1$. In such a case, $\rho_{min} =\lambda^{-1}$. Consider the points $P_i = \bs{x}(\phi_i), i=0,1,2$. 

 Assuming then that $\mu <\rho_{min}$, an important geometric consequence is that $\phi_2-\phi_1 < \pi$, which simplifies the calculation below. Then $T$ is a twist map and $$G = -2\sin(\phi_{10}^-)C(\phi_{10}^+)- \frac{1}{\mu}\lambda \left(\phi_{21}^- -\frac{1}{2}\sin(2 \phi_{21}^-)\right) + F_{\mu}(2\sin(\phi_{21}^-)C(\phi_{21}^+))$$ where $\phi_{ab}^\pm = \dfrac{\phi_a \pm \phi_b}{2}$. 

%\noindent There are obvious examples of $(1,2)$ periodic orbits in the elliptic case, depending upon the lengths of the major and minor axes. These $(1,2)$ periodic orbits are shaped like the Bunimovich stadium: geometrically, these occur when the centers of the magnetic circular arcs are on the major (resp. minor) axis and the line segment portions of the trajectory are parallel to the major (resp. minor) axis.

In the case that $\Omega$ is the unit disk ($\lambda =1$), we see that $C=1$ and hence $\phi_2 - \phi_0 = 2\chi$. Another geometric observation is that $\theta_i=\theta$ and $\chi$ are both constant, and hence $u_i = u$ is constant. This is because the the diagram in Figure \ref{stdpic1} is symmetric about the line connecting the center of $\Omega$ and the center of the circular arc. This in turn implies that all of our geometric quantities, $\ell_1$, $\ell_2$, $|\gamma|$, and $\mathcal{S}$ are constant as they only depend upon $\theta$ and $\chi$. Using the elementary geometry of a circle-circle intersection,  
$$ \chi = \theta + \arcsin\left( \frac{\mu \sin(\theta)}{\sqrt{1+\mu^2 - 2\mu \cos(\theta)}} \right) $$ %$$\cos(\chi) = \frac{\cos(\theta)-\mu}{\sqrt{(\cos(\theta)-\mu)^2 + \sin^2(\theta)}}$$ 
and the return map is explicitly $$T(s,u) = (s+2\chi,u).$$ It is clear that since $\theta$ is constant, $u$ is a constant of motion and the system is integrable (in the sense of Liouville). 
Further, the simplicity of the return map in the circular case allows us to find periodic orbits directly. Since $$T(s_0,u_0) = (s_0+2\chi, u_0),$$ we see that $$\widehat{T}^{n}(s_0,u_0) = (s_0 + 2n\chi,u_0).$$ Therefore a periodic orbit will have rotation number $m/n$ if and only if  
%$s_0 + 2n\chi = s_0 + 2m\pi$ for some $m \in \Z$, implying 
$\chi = m\pi/n.$ %is a positive rational multiple of $\pi$.  %since L = 2\pi when \lambda =1.  

%\noindent In a circle of radius $R$, the generating function can be computed to be
%$$G = -2R\sin(\theta) - \mu \psi + \frac{\mu}{2}\left( \psi - \sin(\psi)\right) - \frac{R^2}{2\mu} \left( \psi -2\theta - \sin(\psi-2\theta) \right).$$

\begin{figure}[p]
\begin{tabular}{ c c }  
(a) \includegraphics[width =0.45\textwidth]{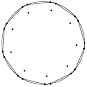} & (b) \includegraphics[width =0.45 \textwidth]{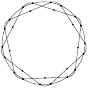} \\
(c) \includegraphics[width =0.45\textwidth]{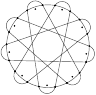} & (d) \includegraphics[width =0.45\textwidth]{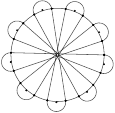} \\  
(e) \includegraphics[width =0.45\textwidth]{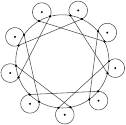} & (f) \includegraphics[width =0.45\textwidth]{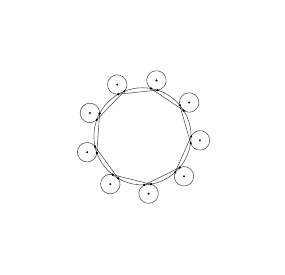} \\
\end{tabular}
\caption{Periodic orbits of period 9 in the unit circle with $\mu<\rho_{min}$: (a)  $(1,9)$ orbit; (b) $(2,9)$ orbit; (c) $(4,9)$ orbit; (d) $(5,9)$ orbit; (e) $(7,9)$ orbit; (f) $(8,9)$ orbit. The dots along the circle are the points $P_i$ while the other dots are the centers of the Larmor arcs.}
\label{period 9 orbits}
\end{figure}

\section{Existence and Nonexistence of Caustics}\label{CausticsChapter}
\subsection{Preliminary Results}

In both standard and magnetic billiards, the question of the existence of caustics has been addressed by Lazutkin \cite{L}, Berglund and Kunz \cite{BK}, Moser \cite{Mo2}, \cite{Mo1}, and more in several variants of the standard billiard problem. Trajectories with caustics (the ``whispering gallery modes'') correspond to invariant curves (a homotopically nontrivial curve) in phase space. Lazutkin had to assume a high degree of differentiability of the boundary in order to guarantee the existence of caustics, though this was later reduced to degree 6 by Douady \cite{D}. 

Due to the nature of inverse magnetic billiards problem, we call a smooth closed convex curve in $\Omega$ with the property that each trajectory that is tangent to it stays tangent to it after each successive reentry an \emph{inner convex caustic}. An analogous definition holds for \emph{outer} caustics which contain $\Omega$. 

For example, in a circle of radius $R$, elementary geometry shows that all trajectories of the inverse magnetic billiard have both inner and outer caustics that are circles of radii $r_{inner} = R|\cos(\theta_0)| = R|u_0|$ and $r_{outer} = \mu+\sqrt{R^2 + \mu^2 - 2R \mu \cos(\theta_0)}$, respectively. All of the trajectories in Figure \ref{period 9 orbits} have both inner and outer circular caustics.  \\

Our first result in this regard is an inverse magnetic version of Mather's theorem (\cite{Ma1}, \cite{Ma2}): If a billiard table with a smooth convex boundary curve has a point of vanishing curvature, then the billiard inside the curve has no caustics. 

\begin{theorem}
If the boundary of the billiard table $\partial\Omega$ has a point of vanishing curvature and $\mu <\rho_{min}$, the inverse magnetic billiard has no interior caustics. 
\end{theorem}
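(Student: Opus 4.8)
The plan is to run Mather's argument by contradiction, replacing his mirror (focusing) equation for a single reflection by the Jacobi/focusing relation for the inverse magnetic return map, which I would read off the explicit Jacobian $DT$ computed from Proposition \ref{JacCalc}. Suppose an inner convex caustic $C \subset \Omega$ exists. Because $\mu < \rho_{min}$, Theorem \ref{twistcondgf} tells us $T$ is a monotone twist map, so Birkhoff's graph theorem applies: the invariant curve $\Upsilon \subset \mathcal{P}$ swept out by the trajectories tangent to $C$ (the whispering-gallery curve) is the graph of a Lipschitz function $u = U(s)$ over the entire circle $s \in \R{}/L\Z$. Since the $s$-projection of $\Upsilon$ is onto, there are orbits whose re-entry and exit points lie arbitrarily close to the flat point $s^{*}$ with $\kappa(s^{*}) = 0$; these are the orbits on which I intend to force a contradiction.

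Next I would encode tangency to $C$ by focal (tangent) lengths. Each inner chord $P_{2i}P_{2i+1}$ meets $C$ tangentially at one point, whose distances from the two endpoints are the forward and backward focal lengths of the infinitesimal beam tangent to $C$. Since $C$ is a fixed compact convex curve with $\mathrm{dist}(C,\partial\Omega) > 0$, every such focal length lies in a fixed band $[c,\,\mathrm{diam}(\Omega)]$ with $c>0$; this uniform two-sided positive bound is the geometric fact I want to violate. The focal length at each re-entry point is in bijection with the invariant slope $U'(s)$, and its evolution under one iterate of $T$ is the Möbius action of $DT$ on slopes. Tracking a beam through one return — free propagation along the inner chord $\ell_1$, the transition at the exit point $P_1$ where the trajectory curvature jumps $0 \to \tfrac{1}{\mu}$, propagation along the Larmor arc $\gamma$, and the transition at the re-entry point $P_2$ where the curvature jumps $\tfrac1\mu \to 0$ — shows that the two boundary curvatures $\kappa_1$ and $\kappa_2$ enter only through the two transition terms, exactly as $\kappa$ enters the diagonal Hessian entries of the generating function in the decomposition $G = E + F_\mu$.

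The contradiction is then the magnetic analog of Mather's. In ordinary billiards the forward-focal recursion at a reflection reads $\tfrac{1}{a}+\tfrac{1}{b} = \tfrac{2\kappa}{\sin\theta}$, so a reflection point approaching a flat point ($\kappa \to 0$) forces the outgoing focal length $b$ negative, which is incompatible with $C$ being a convex caustic interior to $\Omega$. Here I would derive the corresponding relation for the composite transition and show that, as an exit or re-entry point tends to $s^{*}$ along orbits of $\Upsilon$, the boundary-curvature contribution to the focusing balance vanishes while the focal terms attached to $C$ remain bounded below in absolute value, so the induced slope map cannot return $U'$ to the admissible band; taking the limit $s_2 \to s^{*}$ (all angles $\theta_i$ and arc data staying in compact ranges because $\mu<\rho_{min}$ keeps $T$ smooth in a single curvature regime) yields the contradiction, and hence no inner caustic exists.

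The step I expect to be the main obstacle is precisely the feature that distinguishes this problem from reflective billiards: at each magnetic transition the trajectory curvature jumps by $\tfrac1\mu$, contributing a nonzero focusing term that survives even when the boundary is flat. Consequently one cannot degenerate the focusing at a single transition as in Mather's one-step argument — a naive mirror equation at a flat re-entry point still contains the $\tfrac1\mu$ term and does not by itself force a focal length out of its band. The crux will be to combine the exit and re-entry transitions of a return so that the $+\tfrac1\mu$ and $-\tfrac1\mu$ jumps cancel (consistent with the reduction to standard billiards as $\mu \to 0$ noted after Theorem \ref{twistcondgf}), leaving a residual contribution whose curvature factor genuinely vanishes at $s^{*}$; equivalently, to verify directly from the entries of $DT$ that, in the limit of vanishing boundary curvature, the associated Möbius map on slopes cannot preserve the finite band imposed by the graph property of $\Upsilon$. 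Carrying out this cancellation and the accompanying sign/bound bookkeeping is where the real work lies.
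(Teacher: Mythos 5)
Your skeleton---Birkhoff's graph theorem for the twist map $T$ (legitimate here since $\mu<\rho_{min}$, by Theorem \ref{twistcondgf}), a family of caustic-tangent chords, and a contradiction forced at the flat point---is the same as the paper's, and your instinct about the $\pm\frac{1}{\mu}$ focusing jumps cancelling is correct in one precise sense: setting $\kappa_1=\kappa_2=0$ in Proposition \ref{JacCalc} and using that a common tangent line at the two crossings forces $\theta_1+\theta_2=2\chi$, the entries of $DT_2$ collapse to $\partial u_2/\partial s_1=0$ and $\partial u_2/\partial u_1=1$, i.e.\ a pure shear with off-diagonal entry $\ell_2\cos(\chi)/(\sin\theta_1\sin\theta_2)$, whose sign change at $\chi=\pi/2$ is exactly the trichotomy the paper's proof runs on.

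The genuine gap is in how you propose to exploit this. The hypothesis supplies a single point $s^*$ of vanishing curvature, so along an orbit tangent to a putative caustic $C$ only one of the two crossings of a given Larmor arc can be flat: since $\mathrm{dist}(C,\partial\Omega)>0$ keeps the angles $\theta_i$ in a compact subset of $(0,\pi)$ and the chord lengths bounded below, the exit point $P_1$ stays a definite distance from the re-entry point $P_2\approx\Gamma(s^*)$, and the curvatures at the other crossing points do not become small. Consequently the composite transition you want to degenerate does not degenerate: with $\kappa_2=0$ alone, the focusing entry $\partial u_2/\partial s_0$ of $DT$ in the Corollary to Proposition \ref{JacCalc} still contains $\kappa_0$-dependent terms and the arc term $2\sin(\chi)\sin(2\chi-\theta_1-\theta_2)(\kappa_0\ell_1-\sin\theta_0)/(\ell_2\sin\theta_1)$, which vanishes only when $\tau_1=\tau_2$ (equivalently $\theta_1+\theta_2=2\chi$); the cancellation you need thus requires first-order flatness, or at least a common tangent, at \emph{both} endpoints of the arc, which is unavailable. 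So the claim that ``the boundary-curvature contribution to the focusing balance vanishes'' as the re-entry point tends to $s^*$ is false for the full return map, and the slope-band contradiction does not materialize as described: at the flat point $DT$ is a generic symplectic matrix, and nothing quantitative prevents it from preserving a finite slope band. The paper's proof sidesteps this by never combining the two crossings of a single orbit. It compares two infinitesimally close orbits at the \emph{same} flat re-entry point, where the tangent lines at $P_2$ and $P_2^*$ coincide to first order precisely because $\kappa(P_2)=0$: the graph property orders $P_2^*$ forward of $P_2$, tangency of both outgoing chords $P_2P_3$ and $P_2^*P_3^*$ to the interior convex caustic forces them to intersect inside $\Omega$, and the three cases $\chi<\pi/2$, $\chi=\pi/2$, $\chi>\pi/2$ show they cannot (diverging or parallel). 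That sign analysis at the single flat crossing---not a vanishing of net focusing over the arc---is the missing idea; if you want to keep the Jacobi/slope formulation, the comparison must likewise be made between the invariant-curve slope and the caustic-imposed slope at the one flat crossing.
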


\begin{proof}
By Birkhoff's Theorem (\cite{Bir}), an invariant curve of an area-preserving twist map is a graph of some function. If our billiard has a caustic, then we have a one-parameter family of chords $P_0P_1$ to $\Gamma$ corresponding to points on the invariant curve. The graph property of Birkhoff's Theorem implies that if $P_0^*P_1^*$ is a nearby chord such that $P_0^*$ has moved along $\Gamma$ in the positive direction from $P_0$ then $P_1^*$ has moved in the positive direction from $P_1$ on $\Gamma$. These chords must intersect in the interior of $\Gamma$, and by the existence of the caustic, must be tangent to the caustic. 

\begin{figure}[htb]
\includegraphics[width=0.7\textwidth]{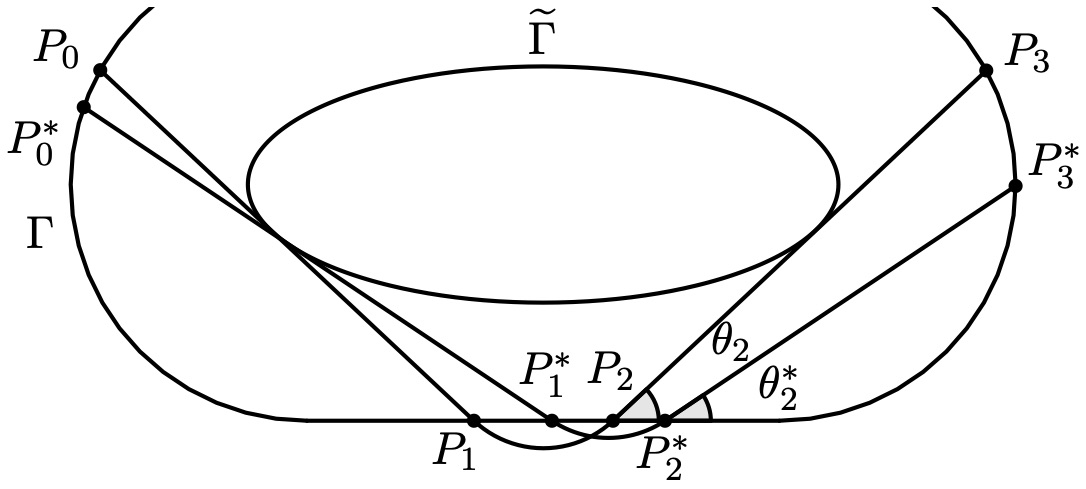}
\caption{Picture of the proof of the nonexistence of caustics if the boundary has a point of vanishing curvature.}
\label{vancurv}
\end{figure}

Assume an interior caustic $\widetilde{\Gamma}$ exists. The billiard portion of a trajectory forms a chord $P_0P_1$ tangent to the caustic, moves along its magnetic arc, and reenters $\Omega$ to form the next chord $P_2,P_3$, also tangent to the caustic. Suppose the curvature $\Gamma$ as $P_2$ vanishes. Consider an infinitesimally close chord $P_0^*P_1^*$, tangent to the same caustic, as described earlier, along with its next chord $P_2^*P_3^*$. Since the curvature at $P_2$ vanishes, the tangent line at $P_2^*$ is, in the linear approximation, the same as the one at $P_2$. Let $\theta_2$ and $\theta_2^*$ be the angle between the linear approximation and the chords $P_2P_3$ and $P_2^*P_3^*$, respectively.

There are three geometrically distinct cases. If $\chi< \frac{\pi}{2}$, then $\theta_2 > \theta_2^*$, and so the chords $P_2P_3$ and $P_2^*P_3^*$ will not intersect in the interior of $\Gamma$, a contradiction. See Figure \ref{vancurv}. Similarly, if $\chi > \frac{\pi}{2}$, $\theta_2 > \theta_2^*$. And if $\chi = \frac{\pi}{2}$, then the chords $P_2P_3$ and $P_2^*P_3^*$ are parallel and will not intersect. 
\end{proof}

To better understand the nature of caustics in this inverse magnetic billiard setting, we seek to understand the maps $T_1$ and $T_2$ near the boundary, as they show qualitatively different behavior. We also make the adjustment to the maps $T$, $T_1$, and $T_2$ so they are defined on the annulus $\R{}/L\Z\times [0,\pi]$ so the second variable is $\theta_i$ instead of $u_i$. \\

Lazutkin produced a well-known calculation of the Taylor expansion of the billiard map $T_1$ up to fourth order in $\theta$ (\cite{L}) and Berglund and Kunz calculate the Taylor expansion of the inner magnetic billiard map $T_2^*$ up to first order in $\theta$ (\cite{BK}). While Lazutkin proved the existence of a positive measure set of caustics sufficiently close to the boundary, Berglund and Kunz show the existence of caustics in inner magnetic billiards for three special cases by citing a version of the KAM theorems (\cite{Mo1}, \cite{D}). %Our goal is to produce a similar result to that of \cite{BK} and \cite{L} using Taylor expansions for $T_2$ and hence $T$. 

\subsection{Mimicking the Approach of Berglund and Kunz}\label{BKStyleCaustics}

We can investigate the behavior of the outer magnetic billiard map $T_2$ using the same techniques in \cite{BK}, and ultimately learn about $T$. For a nonzero magnetic field near the boundary, we will be able to apply KAM theorems to show the existence of invariant curves.

By adapting the proof of the Taylor expansion of the inner magnetic billiard map $T_2^*$ in section 5.2 of \cite{BK} to the outer magnetic billiard map $T_2$, we arrive at a similar expression. Coefficients are also calculated in appendix \ref{taylorexppf} directly. We state this result in the following proposition.

\begin{prop}\label{BKMagMap}
If the boundary $\partial \Omega$ is $C^k$, the outer magnetic billiard map $T_2$ is $C^{k-1}$ for small $\sin(\theta_1)$ and has the form 
\begin{align*}
s_2 &= s_1 + \frac{2\mu \sin(\theta_1)}{1-\mu \kappa_1 \cos(\theta_1)} + o(\sin(\theta_1)) \mod L \\
\theta_2 &= \theta_1 + o(\sin(\theta_1)). 
\end{align*}
Therefore, near $u = -1$, the map is of the form
\begin{align*}
s_{2} &= s_1 + \frac{2\mu}{1-\mu \kappa_1 }\theta_1 + o(\theta_1) \mod L\\
\theta_{2} &= \theta_1 + o(\theta_1) \\
\end{align*}
and near $u=1$, writing $\theta_i = \pi-\eta_i$ the map is of the form
\begin{align*}
s_{2} &= s_1 + \frac{2\mu}{1+\mu \kappa_1 }\eta_1 + o(\eta_1) \mod L \\
\eta_{2} &= \eta_1 + o(\eta_1). 
\end{align*}
\end{prop}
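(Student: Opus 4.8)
The plan is to realize the re-entry point $P_2$ as the \emph{second} intersection of the Larmor circle with $\partial\Omega$, working in coordinates adapted to the exit point, and then to extract both the regularity and the asymptotics from the implicit function theorem. First I would fix $s_1$ and, using the Euclidean invariance of the flow, place $P_1 = \Gamma(s_1)$ at the origin with the unit tangent $\bs{t}(s_1)$ along the positive $x$-axis and the inward normal along the positive $y$-axis. Since $\partial\Omega$ is $C^k$ and strictly convex, near $P_1$ it is the graph of a $C^k$ function $y = y(x) = \tfrac12\kappa_1 x^2 + O(x^3)$. The exit velocity makes angle $\theta_1$ with $\bs{t}(s_1)$ and points outward, so $v_1 = (\cos\theta_1, -\sin\theta_1)$; because the motion is counterclockwise the Larmor center sits at $C = P_1 + \mu\mathbb{J}v_1 = (\mu\sin\theta_1,\mu\cos\theta_1)$, and the Larmor circle takes the form
$$x^2 + y^2 = 2\mu\bigl(x\sin\theta_1 + y\cos\theta_1\bigr).$$

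Next I would substitute the boundary graph $y=y(x)$ into the circle equation. The point $P_1$ is the root $x=0$; dividing the resulting relation by $x$ defines a function
$$F(x,\theta_1) = -2\mu\sin\theta_1 + x\bigl(1-\mu\kappa_1\cos\theta_1\bigr) + O(x^2),$$
which is $C^{k-1}$ (the single lost derivative coming from dividing the $C^k$ graph by $x$). One checks $F(0,0)=0$ and $\partial_x F(0,0) = 1-\mu\kappa_1 \neq 0$ in the regime $\mu<\rho_{min}$ (so that $\mu\kappa_1<1$), so the implicit function theorem yields a $C^{k-1}$ solution $x_2(\theta_1)$ with $x_2(0)=0$ and leading behavior
$$x_2 = \frac{2\mu\sin\theta_1}{1-\mu\kappa_1\cos\theta_1} + o(\sin\theta_1).$$
This simultaneously proves that $T_2$ is $C^{k-1}$ near the grazing boundary and gives the $s$-component, since $s_2 - s_1 = x_2 + O(x_2^3)$ because arc length and the $x$-coordinate agree to leading order along the nearly flat graph.

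For the $\theta$-component I would compute the velocity at $P_2$ as the tangent to the Larmor circle, $v_2 \propto \mathbb{J}(P_2 - C)$, and the tangent to $\partial\Omega$ at $P_2$, which has turned by $\tau_2 = \kappa_1 x_2 + O(x_2^2)$. Measuring $\theta_2$ as the angle between these two vectors and expanding, the factor $\tfrac{1+\mu\kappa_1\cos\theta_1}{1-\mu\kappa_1\cos\theta_1}$ produced by the chord geometry is exactly compensated by the rotation of the boundary tangent, so the $\mu\kappa_1$-dependent terms cancel and $\theta_2 = \theta_1 + o(\sin\theta_1)$. Finally, the two boundary forms are read off by inserting $\sin\theta_1 = \theta_1 + o(\theta_1)$, $\cos\theta_1 = 1 + o(1)$ near $u=-1$, and $\theta_i = \pi-\eta_i$ (so $\sin\theta_1 = \eta_1 + o(\eta_1)$, $\cos\theta_1 = -1 + o(1)$) near $u=1$.

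I expect the main obstacle to be the smoothness claim in the grazing limit rather than the leading-order asymptotics: as $\theta_1\to 0$ the second intersection collides with $P_1$, so the naive intersection problem degenerates. The device that rescues this is factoring out the trivial root $x=0$ and verifying the nondegeneracy $1-\mu\kappa_1\cos\theta_1\neq 0$; keeping the error terms uniform in $s_1$ (hence in $\kappa_1$, which ranges over a compact interval by strict convexity) is what secures the stated $C^{k-1}$ regularity and the $o(\sin\theta_1)$ remainders. A secondary care point is the $\theta_2$ expansion, where one must carry the computation one order beyond the obvious $O(\theta_1)$ estimate in order to witness the cancellation that yields $\theta_2 - \theta_1 = o(\sin\theta_1)$.
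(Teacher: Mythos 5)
Your proposal is correct, but it proves the proposition by a genuinely different route than the paper. The paper's own argument (Appendix C, supplemented by adapting \S 5.2 of \cite{BK}) starts from the exact Jacobian entries of $T_2$ computed in Proposition \ref{JacCalc} and evaluates them at the grazing boundary: writing $L = \lim_{\theta_1\to 0^+} \ell_2\cos(\chi)/\sin(\theta_2)$, an application of l'Hopital's rule produces the self-consistency relation $L = L/(2c-1-\kappa_1 L)$ with $c = (1-\mu\kappa_1)^{-1}$, which leaves the dichotomy $L=0$ or $L=(2c-2)/\kappa_1$; the spurious root $L=0$ is excluded by the osculating-circle chord bound $\ell_2 \geq 2\rho_2\sin(\theta_2)$, and then $\partial\theta_2/\partial\theta_1 \to 1$ follows. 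You instead linearize the circle--boundary intersection directly: adapted frame at $P_1$, boundary as a graph $y = \tfrac12\kappa_1 x^2 + O(x^3)$, Larmor circle $x^2+y^2 = 2\mu(x\sin\theta_1 + y\cos\theta_1)$, factor out the trivial root $x=0$, and apply the implicit function theorem to the reduced equation. Your route buys several things at once: the $C^{k-1}$ regularity and the $o(\sin\theta_1)$ remainders come packaged with the expansion (the paper only computes boundary values of derivatives and defers regularity to \cite{BK}); the nonvanishing of $1-\mu\kappa_1\cos\theta_1$ appears transparently as IFT nondegeneracy rather than through the $L=0$ dichotomy; and your explicit verification of the cancellation $\theta_1\frac{1+\mu\kappa_1}{1-\mu\kappa_1} - \frac{2\mu\kappa_1}{1-\mu\kappa_1}\theta_1 = \theta_1$ is exactly the geometric content hidden in the paper's limit $\partial\theta_2/\partial\theta_1 \to 2c-1-(2c-2) = 1$. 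The paper's route, in exchange, recycles exact formulas already established and is the natural companion to Proposition \ref{JacCalc}. Two small repairs to your write-up: drop the parenthetical restriction to $\mu<\rho_{min}$, since the nondegeneracy $\partial_x F(0,0) = 1-\mu\kappa_1 \neq 0$ holds equally when $\rho_{max}<\mu$ (where $1-\mu\kappa_1<0$), and the proposition is used in that regime with $x_2<0$ correctly capturing re-entry behind the exit point; and note that identifying the IFT root with the actual re-entry point in the weak-field case relies on the Larmor circle meeting $\partial\Omega$ only twice (the $\mu$-intersection property), which the paper likewise treats as a separate caveat rather than part of this proof.
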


%\noindent This expression for the outer magnetic billiard map is unsurprisingly very similar to the form of the inner magnetic billiard map found in Proposition \ref{BKmap}. %This is because every outer magnetic billiard trajectory has a dual inner magnetic billiard trajectory. See Figure \ref{duality}. 

 We must be cautious as there are two properties we must check with regards to the map above. First, the map must be well-defined (i.e. the denominators may not vanish). This is only an issue when $\theta \ll 1$. The second is that the outer magnetic billiard map must denote the \textit{correct} intersection of the magnetic arc with the boundary of our billiard table. This is only an issue if a magnetic arc intersects the boundary in more than two places.
 
\begin{defn}
A closed $C^k$, $k\geq 2$, planar curve $\Gamma$ is said to have the \textbf{$\bs{\mu}$-intersection property} for some $\mu > 0$ if any circle of radius $\mu$ intersects $\Gamma$ at most twice.
\end{defn}

 However, a sufficient condition for the $\mu$-intersection property to be satisfied is for either $\mu < \rho_{min}$ or $\rho_{max} < \mu$ (Corollary to Lemma 3 in Appendix D of \cite{BK}). When satisfied, there is a one-to-one correspondence between inner magnetic billiard trajectories and outer magnetic billiard trajectories: For every outer magnetic arc there is a ``dual trajectory'' that is the complementary arc which completes the Larmor circle. This complementary arc can be interpreted as an inner magnetic billiard map with no change to our magnetic field convention. See Figure \ref{duality}. 

\begin{figure}[h]
\includegraphics[width=0.5\textwidth]{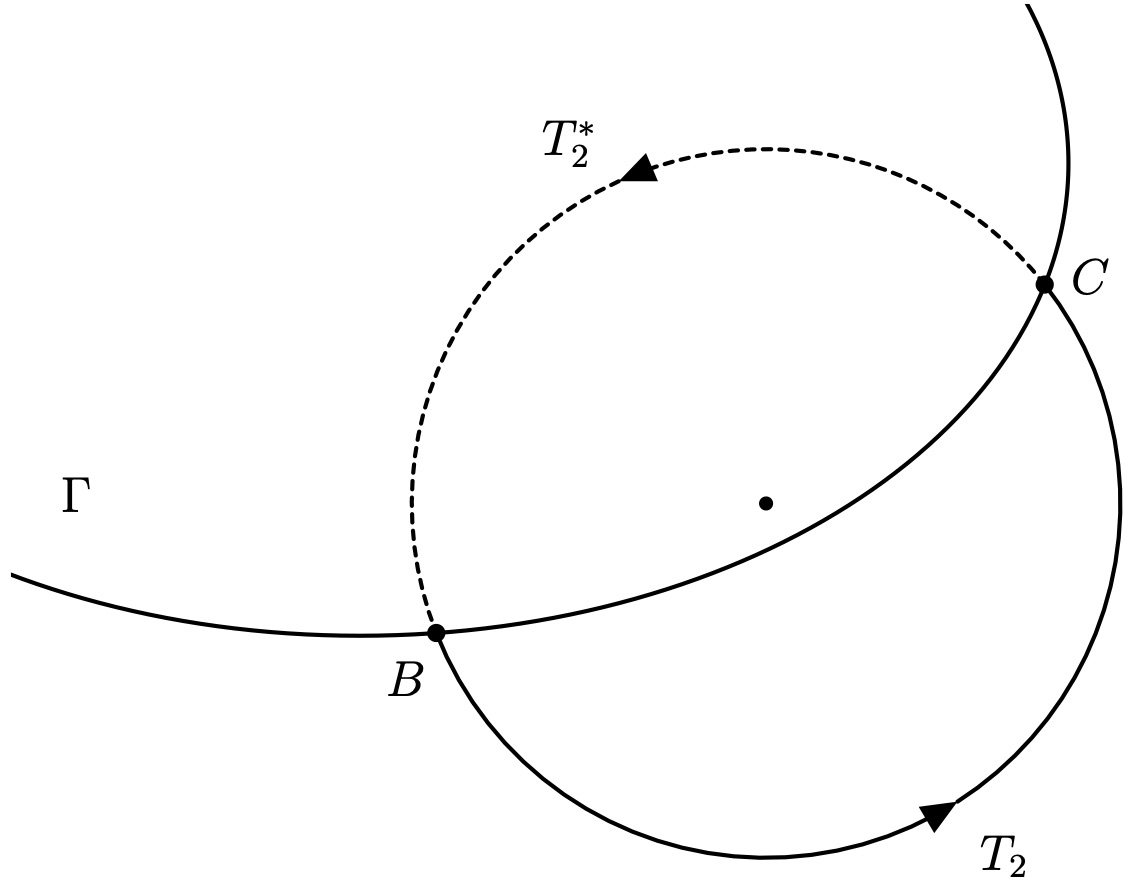} \\
\caption{The duality of the magnetic billiard map: every inner magnetic billiard trajectory has a corresponding outer magnetic billiard trajectory, provided the $\mu$-intersection property is satisfied. }
\label{duality}
\end{figure}

Therefore determining the correct intersection point from our map is only an issue when $\rho_{min} < \mu < \rho_{max}$, as a Larmor circle in this case may intersect $\partial\Omega$ in more than two places. 

%%%%%%%%%%%%%%%%%%%%%%%%%%%%%%%%%%%%%%%%%%%%%%%%%%%%%%%%%%%%%%%%%%%%%%

If we consider the three curvature regimes, we notice the following:
\begin{enumerate}
\item If $\mu < \rho_{min}$, then $\mu \kappa(s) \leq \mu \kappa_{max} <1$, so $0 < 1-\mu \kappa(s)$ for all $s$;
\item If $\rho_{min} < \mu < \rho_{max}$, then $\frac{\kappa_{min}}{\kappa_{max}} < \mu\kappa_{min} < 1 < \mu \kappa_{max} < \frac{\kappa_{max}}{\kappa_{min}}$;
\item If $\rho_{max}< \mu$, then $1 < \kappa_{min}\mu \leq \kappa(s)\mu$, so $1-\mu \kappa(s) <0$ for all $s$.
\end{enumerate}
The denominators of the coefficients in the theorem above are well-defined in cases (1) and (3), but potentially not defined in case (2). %As an immediate corollary, using the known expansion of $T_1$, we have the following through function composition:

\begin{prop}\label{taylorexp}
The inverse magnetic billiard map $T$ admits the following Taylor expansion for $\theta_i$ near 0:
\begin{align*}
s_{i+2}&= s_i + \frac{2}{\kappa_i(1-\mu \kappa_i)}\theta_i +O (\theta_i^2) \mod L\\
\theta_{i+2} &= \theta_i + O(\theta_i^2) \\
\end{align*}
 where we have omitted the dependence upon $s_i$ and that $\kappa_i := \kappa(s_i) \approx \kappa(s_{i+1})$.

\noindent For $\theta_i = \pi-\eta_i$ with $\eta_i$ near 0, the map $T$ admits the Taylor expansion
\begin{align*}
s_{i+2} &= s_i - \frac{2}{\kappa_i(1+\mu \kappa_i)}\eta_i + O(\eta_i^2) \mod L \\
\eta_{i+2} &= \eta_i + O(\eta_i^2). 
\end{align*}
 \end{prop}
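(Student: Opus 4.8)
The plan is to exploit the factorization $T = T_2\circ T_1$ and simply compose the boundary Taylor expansions of the two pieces. Proposition~\ref{BKMagMap} already supplies the expansion of the outer magnetic factor $T_2$ near both boundary circles, so the first task is to record the matching expansion of the standard billiard factor $T_1$. This is the classical glancing-angle (Lazutkin) expansion: for a strictly convex $C^k$ boundary a chord leaving $s_0$ at a small angle $\theta_0$ subtends a boundary arc of length $\frac{2}{\kappa_0}\theta_0$ to leading order, so near $\theta=0$ one has $s_1 = s_0 + \frac{2}{\kappa_0}\theta_0 + O(\theta_0^2)$ and $\theta_1 = \theta_0 + O(\theta_0^2)$, while near $\theta=\pi$ the particle glances in the negative $s$-direction, giving $s_1 = s_0 - \frac{2}{\kappa_0}\eta_0 + O(\eta_0^2)$ and $\eta_1 = \eta_0 + O(\eta_0^2)$. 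I would either cite Lazutkin~\cite{L} for these two lines or derive them directly from the circle-arc geometry; their linear coefficients are also consistent with the $\theta\to 0,\pi$ behavior of the $DT_1$ entries in Proposition~\ref{JacCalc}.

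Next I would compose. Substituting the $T_1$ output into the $T_2$ input and using $\theta_1 = \theta_0 + O(\theta_0^2)$, the only subtlety is the curvature appearing in the $T_2$ coefficient: since $s_1 - s_0 = O(\theta_0)$ and $\kappa$ is Lipschitz, $\kappa_1 = \kappa_0 + O(\theta_0)$, so replacing $\kappa_1$ by $\kappa_0$ in $\frac{2\mu}{1-\mu\kappa_1}$ perturbs that coefficient by $O(\theta_0)$ and, once multiplied by $\theta_1 = O(\theta_0)$, contributes only to the $O(\theta_0^2)$ remainder. This is precisely the approximation $\kappa_i \approx \kappa_{i+1}$ flagged in the statement. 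Adding the two linear contributions gives $\frac{2}{\kappa_0} + \frac{2\mu}{1-\mu\kappa_0}$, which collapses to $\frac{2}{\kappa_0(1-\mu\kappa_0)}$ over a common denominator; the same computation near $\theta=\pi$ gives $-\frac{2}{\kappa_0} + \frac{2\mu}{1+\mu\kappa_0} = -\frac{2}{\kappa_0(1+\mu\kappa_0)}$, the overall sign tracing back to the negative $s$-shift of $T_1$ in that regime. The angle lines $\theta_{i+2} = \theta_i + O(\theta_i^2)$ and $\eta_{i+2} = \eta_i + O(\eta_i^2)$ follow at once from composing $\theta_1 = \theta_0 + O(\theta_0^2)$ with $\theta_2 = \theta_1 + O(\theta_1^2)$.

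The one point needing genuine care, and the main obstacle, is upgrading the remainder from the $o(\theta_1)$ stated for $T_2$ in Proposition~\ref{BKMagMap} to the $O(\theta_i^2)$ asserted here. Rather than chasing the next-order term by hand, I would argue structurally: since $\partial\Omega$ is $C^k$ with $k\geq 3$, the factor $T_2$ is $C^{k-1}$ (hence at least $C^2$) near each boundary circle in the $(s,\theta)$ coordinates, and $T_1$ is smooth there, so $T$ is $C^2$ near $\theta=0$ and $\theta=\pi$. Because $T$ fixes the boundary circle pointwise, $T|_{\partial\mathcal{P}} = \mathrm{Id}$, both $S(s,\theta)-s$ and $\Theta(s,\theta)$ vanish at $\theta=0$, so Taylor's theorem in $\theta$ automatically produces a linear term plus an $O(\theta^2)$ remainder, and the linear coefficients are pinned down by the leading-order composition above. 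The $\eta$ case is identical after the substitution $\theta_i = \pi - \eta_i$. As a check, the coefficients computed directly in Appendix~\ref{taylorexppf} should independently confirm the linear terms.
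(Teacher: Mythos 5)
Your proposal is correct, and its skeleton is the same as the paper's: the paper's Appendix~\ref{taylorexppf} likewise obtains the expansion of $T$ by first pinning down the linear boundary coefficients of $T_2$ and then composing with the classical glancing behavior of $T_1$, merging $\frac{2}{\kappa}$ and $\frac{2\mu}{1\mp\mu\kappa}$ over a common denominator exactly as you do. The genuine difference is where the $T_2$ coefficients come from. The paper re-derives them from the Jacobian entries of Proposition~\ref{JacCalc}: the limit $L=\lim_{\theta_1\to 0^+}\ell_2\cos(\chi)/\sin(\theta_2)$ is shown via l'Hopital to satisfy the self-consistency relation $L=L/(2c-1-\kappa_1 L)$ with $c=(1-\mu\kappa_1)^{-1}$, and the degenerate root $L=0$ is excluded by the osculating-circle bound $\ell_2\geq 2\rho_2\sin(\theta_2)$, giving $L=\frac{2\mu}{1-\mu\kappa_1}$; you instead import these coefficients wholesale from Proposition~\ref{BKMagMap}, which is legitimate within the paper's logical order (and not circular, since that proposition is proved by adapting Berglund--Kunz \cite{BK}), but it outsources what is in fact the main computation of the paper's proof. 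What your route buys in exchange is an explicit treatment of the remainder: Proposition~\ref{BKMagMap} only asserts $o(\sin\theta_1)$, while the stated result claims $O(\theta_i^2)$, and your structural argument --- $T$ is $C^2$ up to the boundary circles when $k\geq 3$, fixes them pointwise, so Taylor's theorem in $\theta$ forces a linear term plus uniform $O(\theta^2)$ once the first derivatives are identified --- makes precise a step the paper leaves implicit. Two small points deserve care in your write-up: the $C^{k-1}$ extension of the chord map $T_1$ to the closed annulus is a classical but nontrivial fact (Lazutkin \cite{L}, Douady \cite{D}) and should be cited rather than asserted, and, as in the paper's discussion of the curvature regimes, the expansion near $\theta=0$ is only meaningful when $1-\mu\kappa$ does not vanish, i.e.\ outside $\rho_{min}<\mu<\rho_{max}$, where in addition the $\mu$-intersection property guarantees the expansion tracks the correct reentry point.
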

 
An outline of the proof is given in appendix \ref{taylorexppf}. First we observe that both versions of this map are not well-defined if the curvature is allowed to vanish, which is consistent with our version of Mather's result. Further, consider the two limiting cases of $\mu$: if $\mu \to \infty$, the map $T$ limits to $s_2  = s_0 + O(\theta_0^2)$, and $\theta_2 = \theta_0 + O(\theta_0^2)$ which the identity map to first order; And if $\mu \to 0^+$, the map $T$ limits to $s_2 = s_0 + \frac{2}{\kappa_0}\theta_0 + O(\theta_0^2)$, which is the standard billiard map to first order. This is consistent with the geometric observations via the generating function in Section \ref{genfun}. \\

We may now make comments about the maps above in the style of \cite{BK}. 

\begin{enumerate}
\item Near $u = -1$, the map $T$ has the form
\begin{align*}
s_{i+2}&= s_i + \frac{2}{\kappa_i(1-\mu \kappa_i)}\theta_i +O (\theta_i^2) \mod L\\
\theta_{i+2} &= \theta_i + O(\theta_i^2). 
\end{align*}
We have already observed that the denominator will not vanish in two cases:
\begin{itemize}
\item If $\mu < \rho_{min}$, then we make the change of variables $\varphi_i = s_i - \mu \tau_i$ and $r_i =2\rho_i \theta_i$ to make the map of the form
\begin{align*}
\varphi_2 &= \varphi_0 + r_0 + O(r_0^2) \mod L -2\pi\mu \\
r_2 &= r_0 + O(r_0^2). 
\end{align*}
This corresponds to the correct intersection of the magnetic arc with the boundary, as this trajectory corresponds to a small billiard chord forward plus a small skip forward along the boundary from the outside. 
\item If $\rho_{max}< \mu$, then we make the change of variables $\varphi_i = \mu \tau_i - s_i$ and $r_i = 2\rho_i\theta_i$ to make the map of the form 
\begin{align*}
\varphi_2 &= \varphi_0 - r_0 + O(r_0^2) \mod 2\pi\mu - L \\
r_2 &= r_0 + O(r_0^2). 
\end{align*}
Again, this is the correct intersection with the boundary, because a large magnetic arc will reenter $\Omega$ ``behind'' its exit point. 
\end{itemize}
\item Near $u = 1$, the map $T$ has the form
\begin{align*}
s_{i+2} &= s_i - \frac{2}{\kappa_i(1+\mu \kappa_i)}\eta_i + O(\eta_i^2) \mod L \\
\eta_{i+2} &= \eta_i + O(\eta_i^2) 
\end{align*}
where we have written $\theta_i = \pi - \eta_i$. Observe that the denominator can never vanish, so this approximation is valid for all three curvature regimes. Moreover, this map can be understood as a short interior billiard chord backwards followed by most of a circular magnetic arc forwards, ultimately resulting in traveling a small distance backwards. The change of variables $\varphi_i = s_i + \mu \tau_i$ and $r_i = 2\rho_i \eta_i$ turns the map into 
\begin{align*}
\varphi_2 &= \varphi_0 - r_0 + O(r_0^2) \mod L + 2\pi\mu \\
r_2 &= r_0 + O(r_0^2). 
\end{align*}
\end{enumerate}

\begin{figure}[p]
\begin{tabular}{ l l }  
(a) \includegraphics[width =0.50\textwidth]{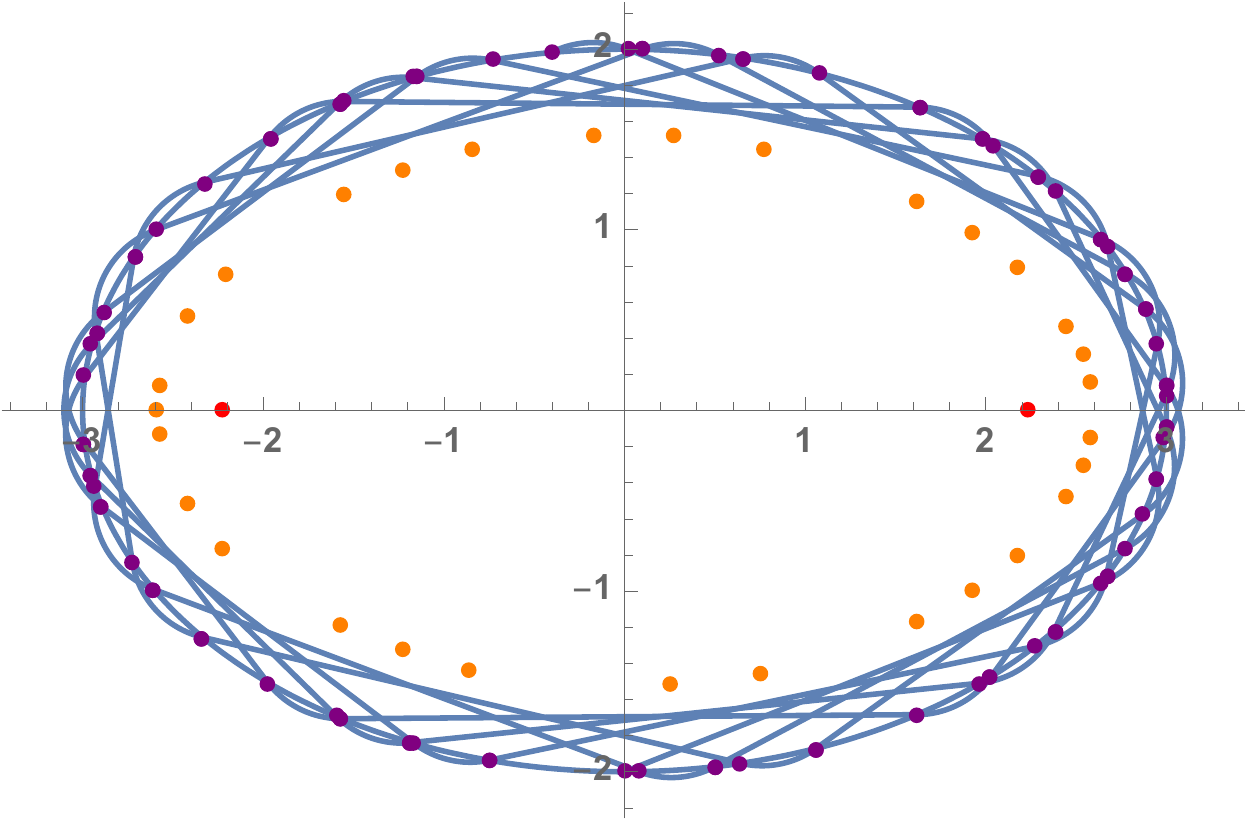} & (b) \includegraphics[width =0.45\textwidth]{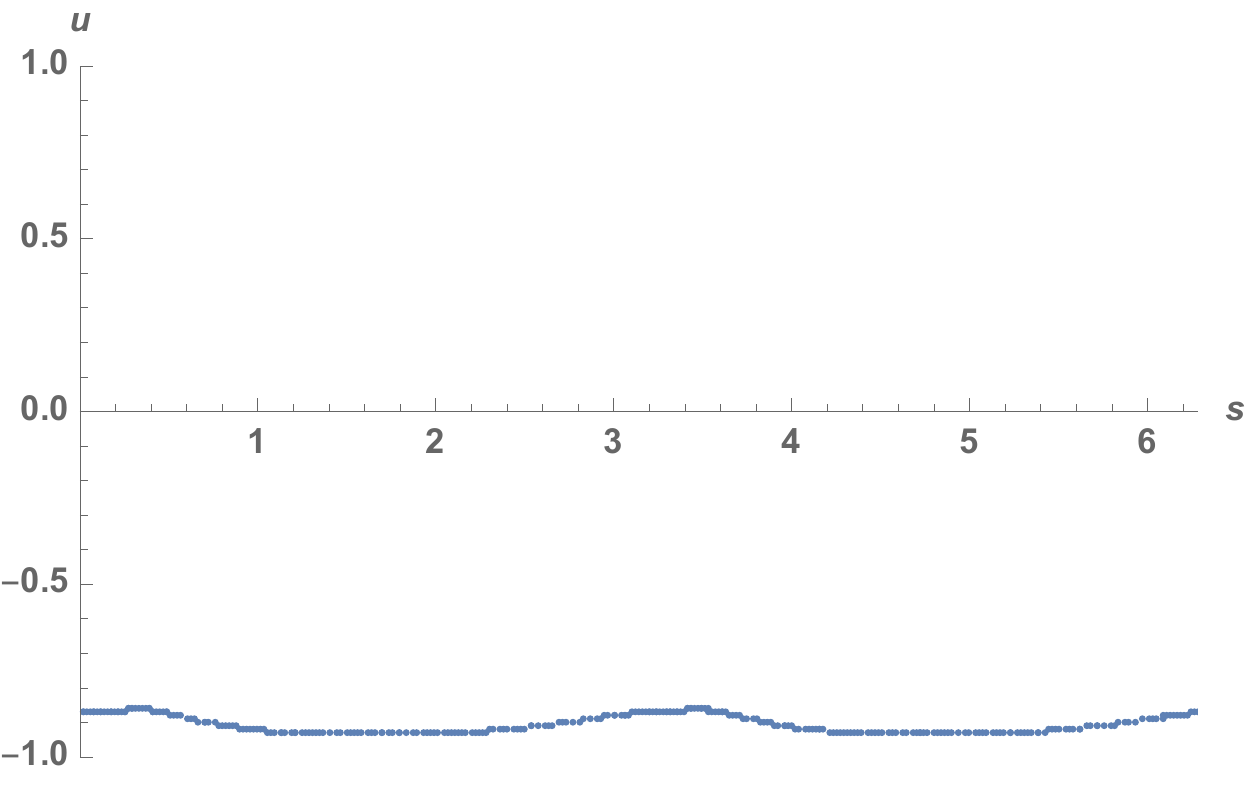} \\
(c) \includegraphics[width =0.50\textwidth]{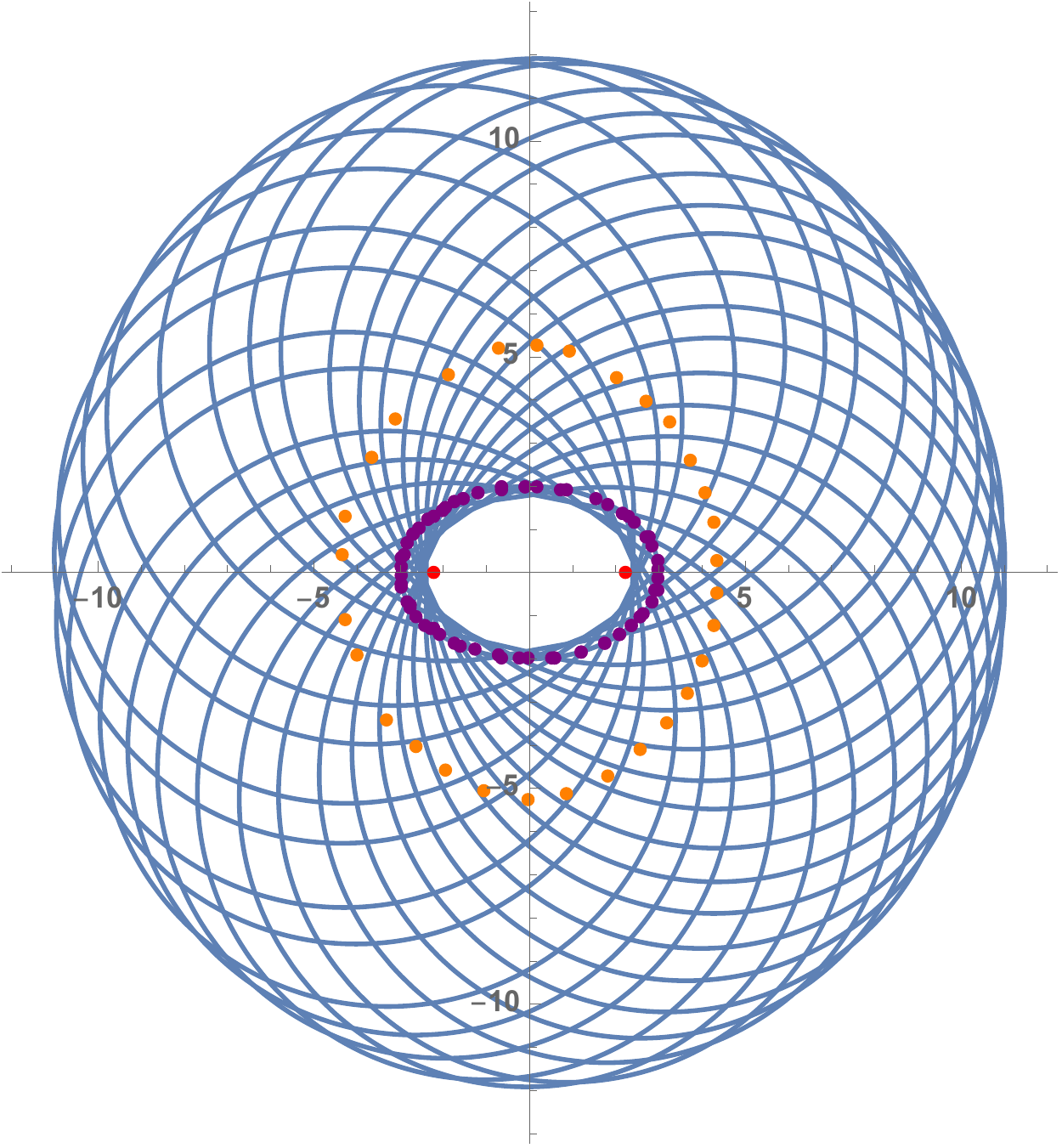} & (d) \includegraphics[width =0.45\textwidth]{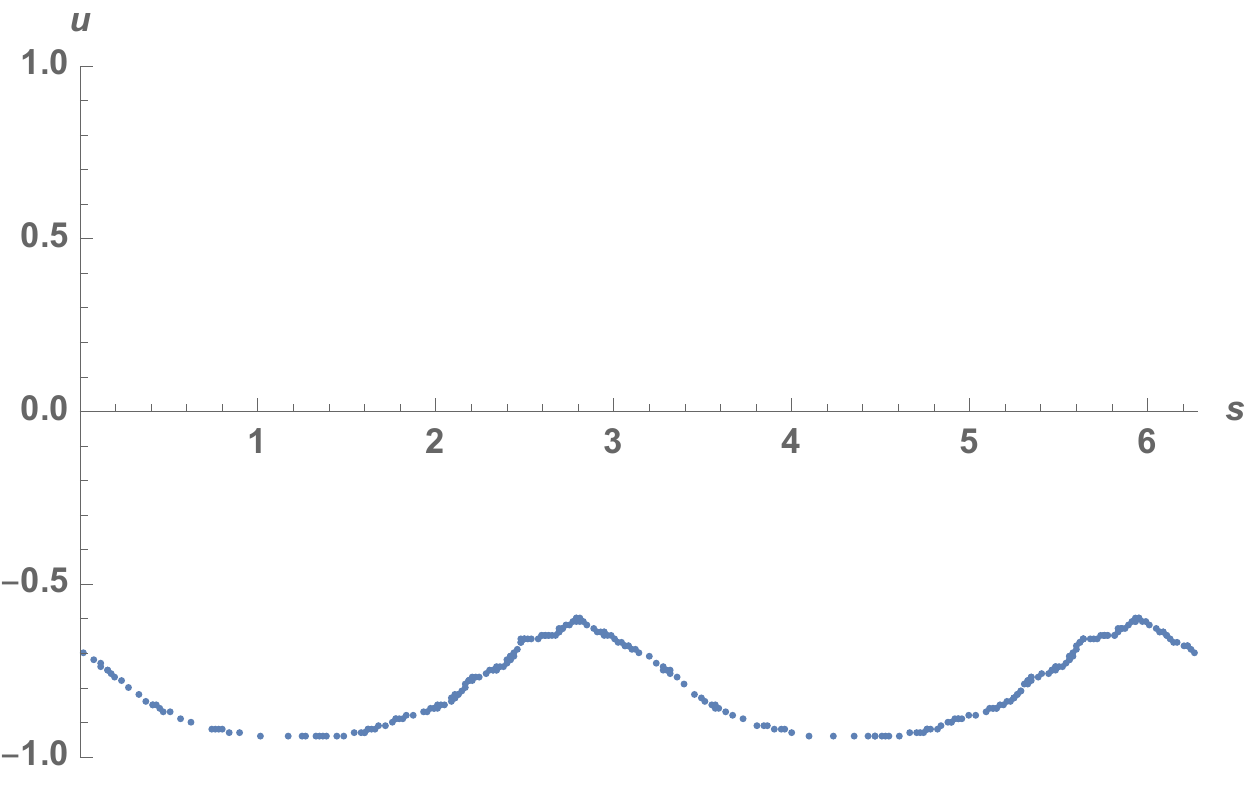} \\  
(e) \includegraphics[width =0.50\textwidth]{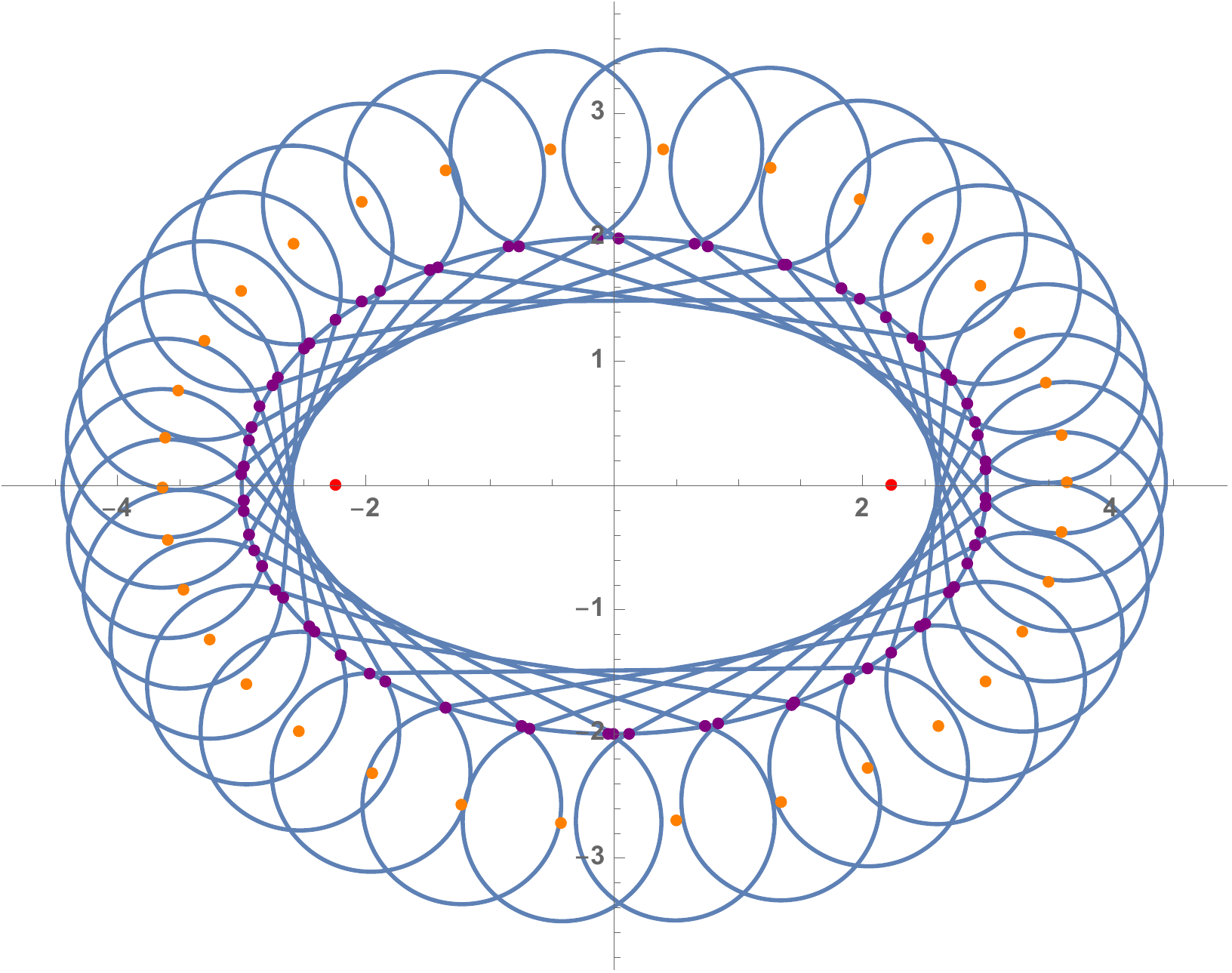} & (f) \includegraphics[width =0.45\textwidth]{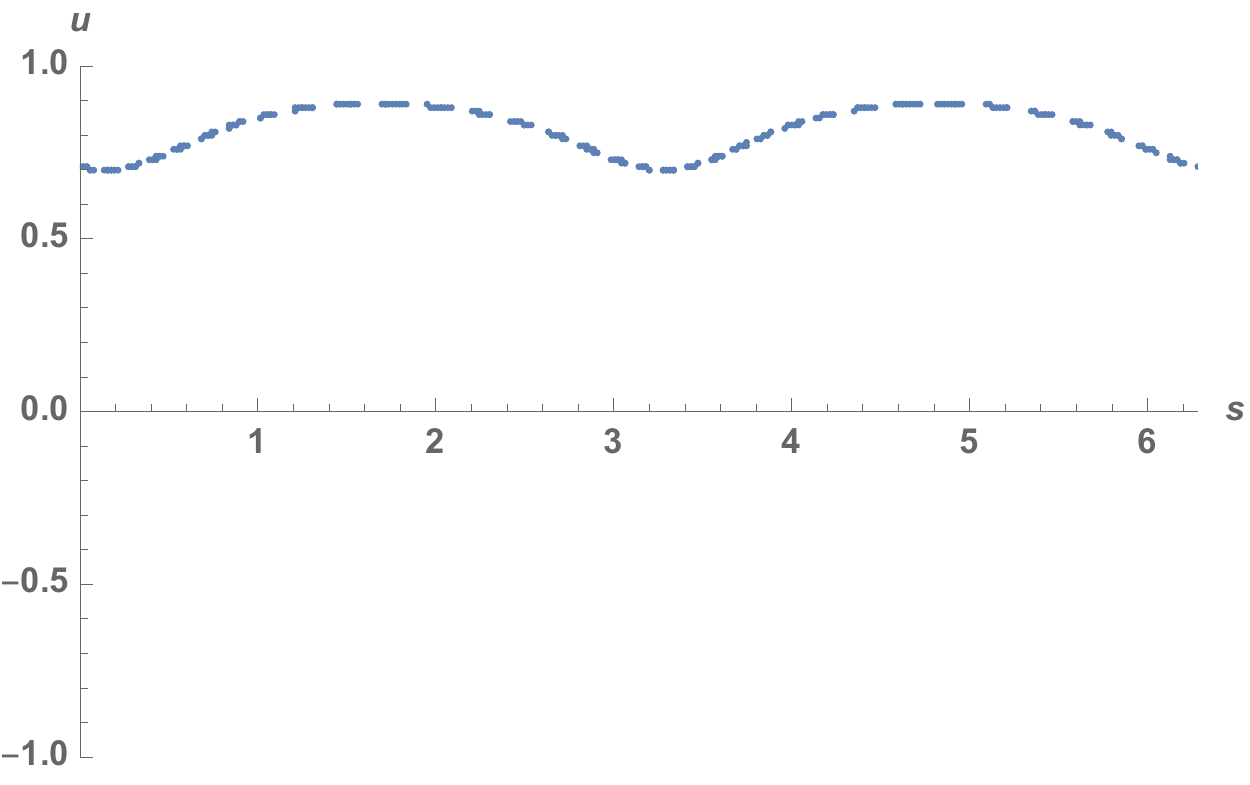} \\
\end{tabular}
\caption{Caustics in an ellipse for the three valid regimes: (a) near $u=-1$ and $\mu < \rho_{min}$; (c) near $u=-1$ and $\rho_{max}< \mu$; (e) near $u = 1$; and their accompanying invariant curves in the $(\phi, u)$-plane, (b), (d), (e), respectively. The centers of the Larmor circles, foci of the ellipse, and points $P_i$ are shown. } 
\label{caustics}
\end{figure}

\noindent Each of these three maps can be interpreted via KAM theorems (\cite{D}, pg. III-8 or \cite{Mo1} pg. 52), and to do so we want to briefly define a relevant condition. 

\begin{defn}
Let $\sigma \in \R{}$. We say $\sigma$ satisfies the \textbf{Diophantine condition} if for every $\frac{p}{q}\in \Q$, there exists $\gamma,\nu \in \R{+}$ such that $$\left| \sigma - \frac{p}{q} \right| \geq \gamma q^{-\nu}.$$
\end{defn}

\begin{theorem}
Consider the inverse magnetic billiard in a strictly convex set $\Omega$ with $C^k$ boundary, $k\geq 6$. Consider the following cases:

\begin{enumerate}
\item if $0 < \mu < \rho_{min}$, define $\zeta = \theta$, $M = L - 2\pi \mu$, and $\lambda = 1$;
\item if $\rho_{max}< \mu < \infty$, define $\zeta = \theta$, $M = 2\pi\mu - L$, $\lambda = -1$;
\item or if $0 < \mu < \infty$, define $\zeta = \pi - \theta$, $M = L + 2\pi \mu$, $\lambda = -1$. 
\end{enumerate}
Then there exists $\epsilon >0$ depending upon $\mu$ and $k$ with the following significance: if $\omega \in [0,\epsilon)$ and satisfies the Diophantine condition, %$$ \left| \frac{\omega}{M} - \frac{p}{q} \right| \geq \gamma q^{-\nu}$$ for some $\gamma, \nu \in \R{+}$ and for any $\frac{p}{q}\in \Q$, 
then there is an invariant curve of the form 
\begin{align*}
s &= \xi + V(\xi) \\
\zeta &= \frac{\omega}{2\mu} + U(\xi),
\end{align*}
where $U, V \in C^1$, $V(\xi +M) = V(\xi) + L-M$, $U(\xi + M) = U(\xi)$. The induced map on this curve has the form $$\xi \mapsto \xi + \lambda \omega.$$ 
\end{theorem}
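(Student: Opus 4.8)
The plan is to read the statement as an application of Moser's invariant-curve (twist) theorem in its finite-differentiability form — exactly the tool used by Lazutkin and sharpened by Douady, and adapted by Berglund and Kunz — applied to the normal form for $T$ near the boundary of the annulus $\mathcal{P}$ already produced in Proposition \ref{taylorexp}. The whole theorem is really a packaging of the three bulleted normal forms stated just after that proposition into a single KAM conclusion.

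First I would assemble the normal form. In each of the three cases the change of variables recorded immediately after Proposition \ref{taylorexp} — namely $(\varphi_i,r_i)=(s_i-\mu\tau_i,\,2\rho_i\theta_i)$ in case (1), $(\mu\tau_i-s_i,\,2\rho_i\theta_i)$ in case (2), and $(s_i+\mu\tau_i,\,2\rho_i\eta_i)$ in case (3) — turns $T$ into
\begin{align*}
\varphi_2 &= \varphi_0 + \lambda r_0 + O(r_0^2) \mod M \\
r_2 &= r_0 + O(r_0^2),
\end{align*}
a map of the cylinder $\R{}/M\Z \times [0,\delta)$ that is a perturbation of the integrable twist $(\varphi,r)\mapsto(\varphi+\lambda r,r)$, with $M$ and $\lambda$ exactly as listed in the three cases (one checks directly that $s\colon 0\to L$ forces $\varphi\colon 0\to M$, which is the geometric content of $V(\xi+M)=V(\xi)+(L-M)$). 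Since $r$ is small near the boundary circle, the $O(r^2)$ terms are small relative to the twist term $\lambda r$ there, so the distance $\epsilon$ to the boundary circle controls the KAM small parameter.

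Next I would verify the hypotheses of the cited theorem (Douady pg.~III-8, Moser pg.~52). The twist condition is trivial since $\partial\varphi_2/\partial r_0=\lambda\neq 0$ uniformly. The intersection property follows from area preservation, i.e.\ $\det(DT)=1$ from the Corollary to Proposition \ref{JacCalc}. The smoothness hypothesis is where $k\geq 6$ enters: by Proposition \ref{BKMagMap} the outer magnetic map is $C^{k-1}$ for small $\sin\theta$, so $T=T_2\circ T_1$ and the normal form are $C^{k-1}$ with $k-1\geq 5$, meeting the differentiability threshold needed to run the invariant-curve theorem. With these in hand, for every $\omega\in[0,\epsilon)$ satisfying the Diophantine condition the theorem yields an invariant curve of the normal form that is a $C^1$ graph close to a level $r=\text{const}$ carrying rotation number $\omega$, on which the induced dynamics is conjugate to the rigid rotation $\xi\mapsto\xi+\lambda\omega$. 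Transforming back by inverting the change of variables gives $s=\xi+V(\xi)$ and $\zeta=\tfrac{\omega}{2\mu}+U(\xi)$ with $U,V\in C^1$; the $M$-periodicity of $U$ and the relation $V(\xi+M)=V(\xi)+(L-M)$ are inherited from the identifications $\varphi\sim\varphi+M$ and $s\sim s+L$, while the leading constant $\tfrac{\omega}{2\mu}$ is pinned down by matching the rotation number to the $r$-level of the curve, reflecting that the natural angular increment along the exterior Larmor arc is its length $|\gamma|=2\mu\chi$ with $\chi\approx\zeta$ near the boundary.

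The main obstacle is not the KAM invocation but confirming that the normal form is a genuinely smooth, well-defined, area-preserving twist map on a full one-sided neighborhood of the boundary circle. Concretely, the denominators $1\mp\mu\kappa$ must stay bounded away from zero on that neighborhood — which is precisely why cases (1) and (2) are separated and why case (3) (valid for all $\mu$, since $1+\mu\kappa>0$ always) is instead placed near $u=1$ — and the map must select the \emph{correct} arc–boundary intersection. The latter is the $\mu$-intersection property, guaranteed for $\mu<\rho_{min}$ and $\rho_{max}<\mu$; in case (3) one must additionally check that near grazing ($\eta$ small) only the intended intersection is relevant even in the intermediate regime. Establishing these uniformly near the boundary, and that the claimed $C^{k-1}$ regularity survives the change of variables so the finite-smoothness theorem genuinely applies, is the heart of the argument, after which the back-substitution and the algebra of the periodicity conditions are routine.
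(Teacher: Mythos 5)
Your proposal is correct and follows essentially the same route as the paper, which likewise deduces the theorem by converting the boundary Taylor expansions of Proposition \ref{taylorexp} into the three normal forms $\varphi_2 = \varphi_0 + \lambda r_0 + O(r_0^2) \bmod M$, $r_2 = r_0 + O(r_0^2)$ via exactly the changes of variables you list, and then invoking the finite-smoothness invariant-curve theorems of Moser and Douady, with $k \geq 6$ supplying the required regularity through the $C^{k-1}$ estimate of Proposition \ref{BKMagMap}. The side conditions you flag---non-vanishing of the denominators $1 \mp \mu\kappa$ in the respective regimes, the $\mu$-intersection property selecting the correct arc--boundary intersection, and area preservation giving the intersection property for the KAM theorem---are precisely the points the paper verifies in the discussion surrounding the theorem, so your argument is a faithful reconstruction of the paper's.
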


\noindent Similarly to the case of inner magnetic billiards, our theorem confirms the existence of invariant curves in three cases (see Figure \ref{caustics}):
\begin{enumerate}
\item Near $u= -1$, $\theta \approx 0$ in a strong magnetic field, $\mu < \rho_{min}$. These correspond to short billiard chords plus short magnetic arcs, keeping the particle's trajectory near the boundary. 
\item Near $u=-1$, $\theta \approx 0$ in a weak magnetic field, $\rho_{max}< \mu< \infty$. These correspond to short billiard chords followed by long magnetic arcs encompassing $\Omega$ and reentering behind the original starting point, still near the boundary. 
\item Near $u = 1$, $\theta \approx \pi$ for all values of the magnetic field. These correspond to backwards billiard chords followed by most of a magnetic arc, reentering close to the starting point and staying near the boundary. 
\end{enumerate}

This approach does not give us any indication of the behavior of the map for the intermediate curvature regime, $\rho_{min}< \mu < \rho_{max}$ near $u=-1$. While numerically we do not observe any invariant curves in this region in this case, we do not have definitive proof. This is also the case in inner magnetic billiards. Moreover, our theorem also indicates that provided we have a sufficiently smooth strictly convex boundary (at least $C^6$), inverse magnetic billiards are not ergodic.

\section{Conclusions and Next Steps}

\begin{figure}[htbp]
\begin{tabular}{ l l }  
(a) \includegraphics[width =0.50\textwidth]{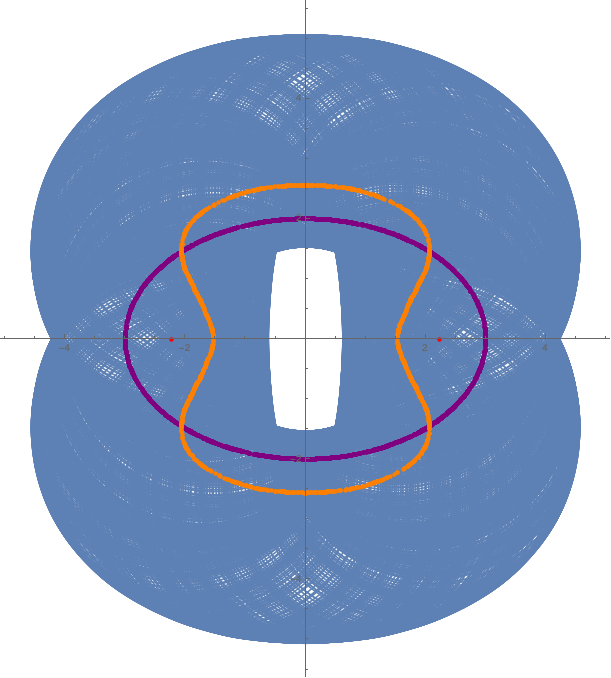} & (b) \includegraphics[width =0.45\textwidth]{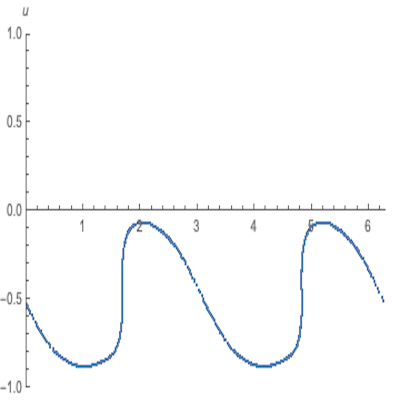} \\ %intermediate case
(c) \includegraphics[width =0.50\textwidth]{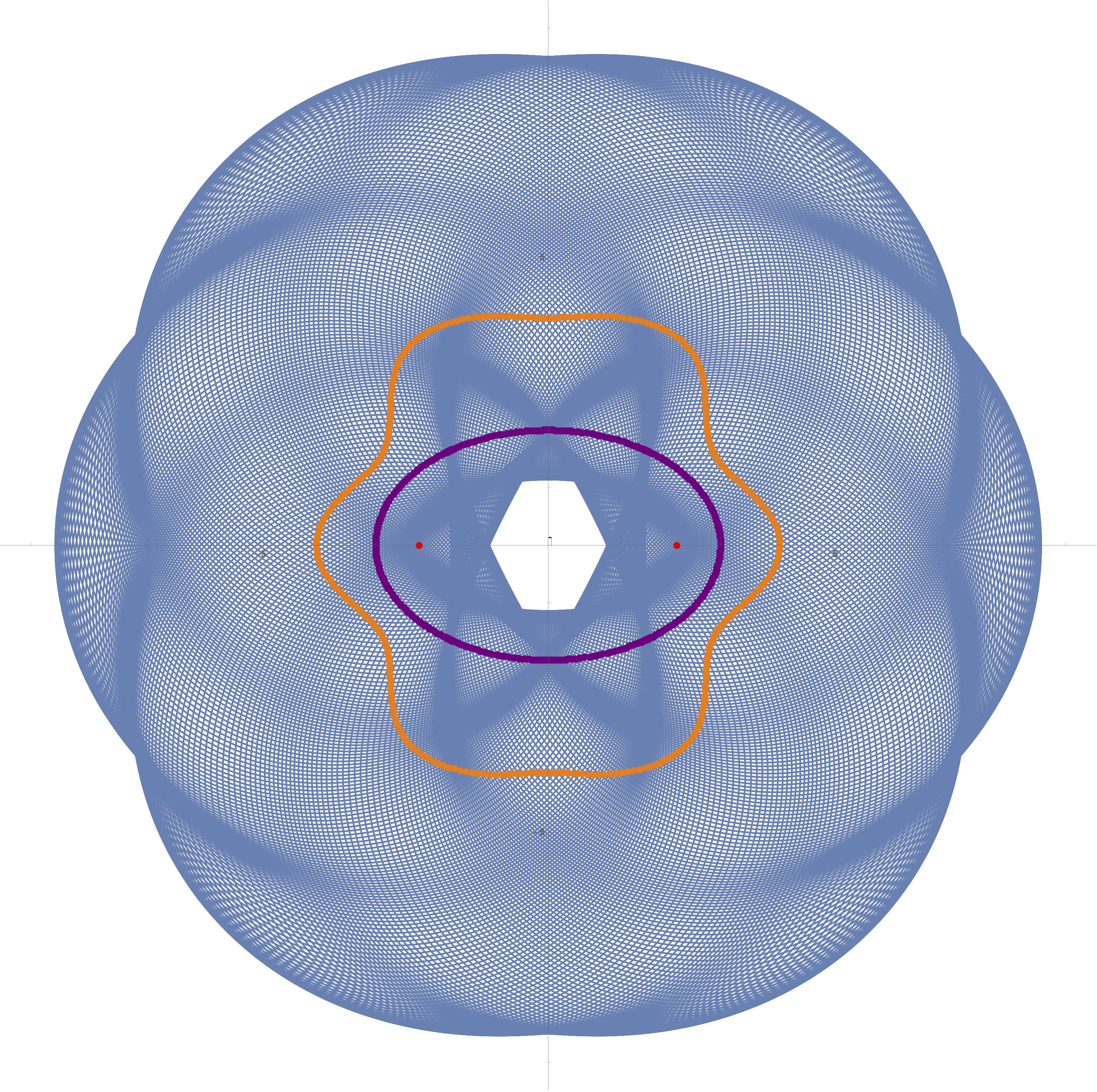} & (d) \includegraphics[width =0.45\textwidth]{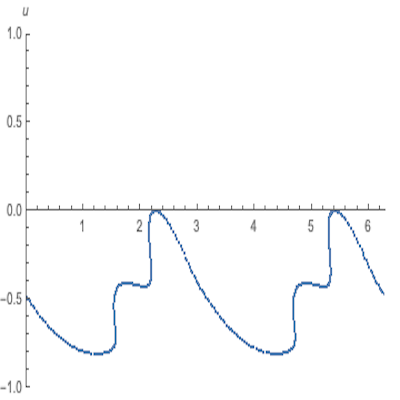} \\  
\end{tabular}
\caption{$C^0$ caustics in an ellipse for the two non-twist curvature regimes and $10^3$ iterations of $T$: (a) $\rho_{min}<\mu < \rho_{max}$; (c) $\rho_{max}< \mu$; and their accompanying invariant curves in the $(\phi, u)$-plane, (b), (d), respectively. The locus of centers of the Larmor circles appear to lie on a continuous curve.  } 
\label{nontwistcaustics}
\end{figure}

We have found that inverse magnetic billiards shares some similarities with standard and magnetic billiards while also showing concrete differences. The influence of the magnetic field on the dynamics is significant, and we have clearly seen that inverse magnetic billiards is a nontrivial perturbation of the standard billiard. 

The behavior of inverse magnetic billiards in the regimes $\rho_{min}< \mu < \rho_{max}$ and $\rho_{max}< \mu$ are not well understood at this time. For example, numerical simulations seem to show the existence of a $C^0$ caustic comprised of piecewise $C^1$ curves. Of further interest is the locus of the centers of the Larmor circles in such a case, as sometimes these centers appear to lie on a smooth simple closed curve with two axes of symmetry. See Figure \ref{nontwistcaustics}. %Further, it seems that understanding properties of the map which sends the center of one Larmor circle to the next would be of interest. What are its properties? Does it preserve any measure? Are there any dynamics associated to this map?

 Another aspect of inverse magnetic billiards that has not been studied is the existence of \emph{outer} caustics. Figures \ref{period 9 orbits}, \ref{caustics}, \ref{nontwistcaustics} all show the existence of caustics outside of $\Omega$, and this phenomena is certainly worth investigating.

%\begin{figure}[htbp]
%\begin{tabular}{ l l }  
%(a) \includegraphics[width =0.50\textwidth]{NonTwist-1Kiterations} & (b) \includegraphics[width =0.45\textwidth]{NonTwist-1Kiterations-inv-curve-square} \\ %intermediate case
%(c) \includegraphics[width =0.50\textwidth]{NonTwist1-1Kiterations-PDF} & (d) \includegraphics[width =0.45\textwidth]{NonTwist1-1Kiterations-inv-curv-square} \\  
%\end{tabular}
%\caption{$C^0$ caustics in an ellipse for the two non-twist curvature regimes and $10^3$ iterations of $T$: (a) $\rho_{min}<\mu < \rho_{max}$; (c) $\rho_{max}< \mu$; and their accompanying invariant curves in the $(\phi, u)$-plane, (b), (d), respectively. The centers of the Larmor circles are marked in orange, the foci of the ellipse are in red, and the points $P_i$ are in dark purple. } 
%\label{nontwistcaustics}
%\end{figure}

\section{Acknowledgements}
The author is grateful to Richard Montgomery, Serge Tabachnikov, and Alfonso Sorrentino for their repeated useful discussions. The author also wishes to thank N. Berglund and H. Kunz for their exposition \cite{BK}, as they provided a thorough roadmap to follow when approaching this problem. %Add more here later. 
This material is based upon work supported by the National Science Foundation under Grant No. DMS-1440140 While the author was in residence at the Mathematical Sciences Research Institute in Berkeley, California, during the Fall 2018 semester. A portion of this work was also supported by Grant No. DP190101838 from the Australian Research Council. 

%\newpage

\begin{appendix}

\section{Proof of Proposition \ref{JacCalc} }
\label{jacpf}

The components of $DT_1$ are well-known (e.g. \cite{KS}, Theorem 4.2 in Part V or \cite{KT}). We provide an outline of the computation of the components of $DT_2$.

Consider a single magnetic arc, as in Figure \ref{magarcdetailsuppchi}a. Define $$\alpha_1 = \text{arg}[(X_2-X_1) + i(Y_2-Y_1)],$$  the polar angle between the positive $x$-axis and the segment $P_1P_2$. Near $P_1$ we see that $\tau_1 - \theta_1 = \alpha_1 - \chi$. A similar picture centered on $P_2$ tells us that $\tau_2 = \alpha_1 + \chi - \theta_2$. This leads to the following equations: 
\begin{align*}
\theta_1 &= \tau_1 - \alpha_1 + \chi \\
\theta_2 &= \alpha_1 + \chi - \tau_2.
\end{align*}

\noindent By construction, we have
\begin{align*}
\ell_2^2 &= (X_2 - X_1)^2 + (Y_2 - Y_1)^2 \\
\tan(\alpha_1) &= \frac{Y_2 - Y_1}{X_2 - X_1}
\label{ell2}
\end{align*} 
%\noindent Figure \ref{zoomedinangles} tells us $\tan(\alpha_1) = \frac{Y_2-Y_1}{X_2-X_1}$. 
These equations imply
\begin{align*}
%\tan(\alpha_1) &= \frac{Y_2 -Y_1}{X_2-X_1} \\
\fpd{\alpha_1}{s_1} & = \frac{1}{\ell_2^2}\left[ (Y_2 - Y_1)X^\prime(s_1) - (X_2 - X_1) Y^\prime(s_1)\right] \\
 &= \frac{1}{\ell_2^2} \left[ \ell_2\sin(\alpha_1) \cos(\tau_1) - \ell_2\cos(\alpha_1)\sin(\tau_1)\right] = \frac{\sin(\chi-\theta_1)}{\ell_2}, \\
\fpd{\ell_2}{s_1} &= \frac{1}{2\ell_2}\left[2(X_2 - X_1)(-X^\prime(s_1)) + 2(Y_2 - Y_1)(-Y^\prime(s_1)) \right] \\
&= -\left[\cos(\tau_1)\cos(\alpha_1) + \sin(\tau_1)\sin(\alpha_1)\right] = -\cos(\theta_1-\chi), \\
\fpd{\chi}{s_1} &= \frac{1}{\ell_2\cos(\chi)}\frac{\ell_2}{2\mu}\fpd{\ell_2}{s_1} 
= -\frac{\sin(\chi)\cos(\theta_1-\chi)}{\ell_2\cos(\chi)}.
\end{align*}
\noindent Next, we differentiate the angle formulas for $\theta_1$, $\theta_2$ with respect to $s_1$ to get
$$
\fpd{\theta_1}{s_1}  = \kappa_1 - \frac{\sin(2\chi-\theta_1)}{\ell_2\cos(\chi)}, \;\;\;\;\;
\fpd{\theta_2}{s_1} = - \frac{\sin(\theta_1)}{\ell_2\cos(\chi)}.$$
\noindent Repeating this process again but with respect to $s_2$ yields
%$$
%\fpd{\alpha_1}{s_2} = \frac{\sin(\chi-\theta_2)}{\ell_2}; \;\;
%\fpd{\tau_2}{s_2} = \kappa(s_2) =:\kappa_2; \;\;
%\fpd{\ell_2}{s_2} = \cos(\chi-\theta_2); \;\;
%\fpd{\chi}{s_2} = \frac{\sin(\chi)\cos(\chi-\theta_2)}{\ell_2\cos(\chi)}.
%$$
%\noindent Differentiate the angle formulas for $\theta_1$, $\theta_2$ with respect to $s_2$ and simplify to get
%\begin{align*}
$$
\fpd{\theta_1}{s_2} = \frac{\sin(\theta_2)}{\ell_2\cos(\chi)}, \;\;
\fpd{\theta_2}{s_2} = \frac{\sin(2\chi-\theta_2)}{\ell_2\cos(\chi)} - \kappa_2. $$
The above quantities determine $d\theta_1$ and $d\theta_2$ as a function of $ds_1$ and $ds_2$. Solve this linear system for $ds_2$ and $d\theta_2$ in terms of $ds_1$ and $d\theta_1$. Lastly, writing $du_i = \sin(\theta_i)d\theta_i$ we obtain the components of $DT_2$.

\section{Proof of Theorem \ref{twistcondgf}}\label{twistcondgfpf}
%\section{Proof of Theorem \ref{twistcond}}\label{twistcondpf}

The proof that $T$ satisfies the twist condition $\fpd{s_2}{u_0}>0$ whenever $\mu < \rho_{min}$ is a small exercise in geometry and trigonometry. The full proof can be found in the appendices of \cite{G}.

Next, we derive our expression for the generating function. Following the proof from \cite{BK}, we take our generating function $$G(s_0,s_2) = -\ell_1 - |\gamma| + \frac{1}{\mu}\mathcal{S}$$ and break it into a magnetic field-dependent component and a magnetic field-independent component. Recall the notation used in Figure \ref{stdpic1}, and we proceed without the ``$0,2$" subscripts. Write $\mathcal{S} = Area(\mathcal{A} \cup \mathcal{S}) - \mathcal{A}$ and $Area(\mathcal{A} \cup \mathcal{S})$ to mean the area inside the circular arc $\gamma$ cut by the chord $P_1P_2$. Then 
$$G(s_0,s_2) = \left[-\ell_1 - \frac{1}{\mu}\mathcal{A}\right] + \left[-|\gamma| + \frac{1}{\mu} Area(\mathcal{A} \cup \mathcal{S})\right].$$
From elementary geometry, we see that $$\fpd{\mathcal{A}}{s_2} = \frac{1}{2}\ell_2 \sin(\chi - \theta_2).$$
%\noindent Next, we recall a few useful formulas from earlier and from basic geometry:
%\begin{align*}
%|\gamma| &= \mu\psi = 2\mu\chi \\
%Area(\mathcal{A} \cup \mathcal{S}) &= \frac{\mu^2}{2}(\psi - \sin(\psi)) = \frac{\mu^2}{2}(2\chi - \sin(2\chi)) \\
%\fpd{\chi}{s_2} &= \frac{\cos(\chi -\theta_2)}{2\mu\cos(\chi)} \\
%\sin(\chi) &= \frac{\ell_2}{2\mu}.
%\end{align*}
\noindent Thus 
\begin{align*}
\fpd{G}{s_2} &= 0 - \frac{1}{\mu}\fpd{\mathcal{A}}{s_2} - \fpd{|\gamma|}{\chi} \fpd{\chi}{s_2} + \frac{1}{\mu}\fpd{Area(\mathcal{A} \cup \mathcal{S})}{\chi}\fpd{\chi}{s_2} \\
%&= - \left(\frac{2}{\ell_2}\sin(\chi) \right) \left(\frac{\ell_2}{2}\sin(\chi - \theta_2) \right) - \left( 2\mu \right) \left( \frac{\cos(\chi -\theta_2)}{2\mu\cos(\chi)}  \right) \\
%& \;\;\;\;\; +\frac{\mu}{2}\left( 2 - 2\cos(2\chi) \right) \left( \frac{\cos(\chi -\theta_2)}{2\mu\cos(\chi)}  \right) \\
%&= -\sin(\chi)\sin(\chi-\theta_2) - \frac{\cos(\chi-\theta_2)}{\cos(\chi)} + \frac{\sin^2(\chi)\cos(\chi-\theta_2)}{\cos(\chi)} \\
%&= -\sin(\chi)\sin(\chi-\theta_2) - \frac{\cos(\chi-\theta_2)}{\cos(\chi)} \left( 1-\sin^2(\chi) \right) \\
 &= -\sin(\chi)\sin(\chi-\theta_2) - \cos(\chi)\cos(\chi-\theta_2) = u_2. 
\end{align*}
And the calculation of the other partial derivative is simple since all factors of $G$ except for $\ell_1$ do not depend upon $s_0$. This is just the calculation from the standard billiard map, so $\fpd{G}{s_0} = -u_0.$ Hence $$G(s_0,s_2) = -\ell_1 - |\gamma| + \frac{1}{\mu}\mathcal{S}$$ is the generating function.

\section{Proof of Proposition \ref{taylorexp}}\label{taylorexppf}

\begin{proof}
We compute the coefficients in the Taylor expansion of the map $T_2$ and hence for $T$. Omitting the dependence on $s$, these terms are 

\begin{center}
\begin{tabular}{ll}
%$\displaystyle \fpd{s_1}{\theta_0}(s,0) = \frac{2}{\kappa} = 2\rho$ & $\dis$playstyle \spds{s_1}{\theta_0}(s,0) = -\frac{8}{3} \frac{\kappa^\prime}{\kappa^3} = \frac{8}{3}\rho \rho^\prime$ \\
%$\displaystyle \fpd{\theta_1}{\theta_0}(s,0) = 1$ & $\displaystyle \spds{\theta_1}{\theta_0} (s,0) = \frac{4}{3} \frac{\kappa^\prime}{\kappa^2} = -\frac{4}{3} \rho^\prime$ \\
$\displaystyle \fpd{s_2}{\theta_1}(s,0) = \frac{2\mu}{1-\mu \kappa }$  %= \frac{2\mu\rho}{\rho-\mu}$ 
& $ \displaystyle \fpd{s_2}{\theta_1}(s,\pi) = -\frac{2\mu}{1+\mu \kappa} $ \\
$\displaystyle \fpd{\theta_2}{\theta_1}(s,0) = 1$ &  $\displaystyle \fpd{\theta_2}{\theta_1}(s,\pi) = 1$\\ 
$\displaystyle \fpd{s_2}{\theta_0}(s,0) = \frac{2}{\kappa(1-\mu \kappa)} $& $\displaystyle \fpd{s_2}{\theta_0}(s,\pi) = \frac{2}{\kappa(1+\mu \kappa)} $ \\
$\displaystyle \fpd{\theta_2}{\theta_0}(s,0) =1. $ & $\displaystyle \fpd{\theta_2}{\theta_0}(s,\pi) =1. $ 
\end{tabular}
\end{center}

First we compute $\fpd{s_2}{\theta_1}(s_1,\theta_1)$ near $\theta_1=0$. Using the approximations $\chi \approx \chi^*(\theta_1)$ from the previous appendix and applying the l'Hopital rule in the second equality, we get
\begin{align*}
L &:= \limit{\theta_1}{0^+}{\frac{\ell_2 \cos(\chi)}{\sin(\theta_2)}} = \limit{\theta_1}{0^+}{\frac{\fpd{\ell_2}{s_2}\fpd{s_2}{\theta_1}\cos(\chi) - \ell_2 \sin(\chi)\fpd{\chi}{\theta_1}}{\cos(\theta_2)\fpd{\theta_2}{\theta_1}}} \\
&= \limit{\theta_1}{0^+}{\frac{\cos(\chi - \theta_2) \fpd{s_2}{\theta_1} \cos(\chi) - \ell_2 \sin(\chi) \fpd{\chi}{\theta_1}}{\cos(\theta_2) \left[ \frac{\sin(2\chi-\theta_2)}{\sin(\theta_2)} - \kappa_2 \frac{\ell_2 \cos(\chi)}{\sin(\theta_2)}\right]} } \\
%= \limit{\theta_1}{0^+}{\frac{\frac{\ell_2 \cos(\chi)}{\sin(\theta_1)} + O(\theta_1^2)}{2c-1-\kappa \frac{\ell_2 \cos(\chi)}{\sin(\theta_1)} + O(\theta_1^2)}},
&=\frac{L}{2c-1 - \kappa_1 L}.
\end{align*}
where %the last line is the Taylor expansions of the numerator and denominator near 0 and noting that $\theta_2 \approx \theta_1$ when $\theta_1$ is small and letting 
$c =\frac{\rho_1}{\rho_1 - \mu } = \frac{1}{1-\mu \kappa_1}$. This tells us that $$L = \frac{L}{2c-1 - \kappa_1 L}.$$  
It follows from the convexity of $\Gamma(s)$ and \cite{KS} (Theorem 4.3 in Part V) that $L<\infty$, so  $L=0$ or $L = \frac{2c-2}{\kappa_1}$. We wish to show that $L>0$. Consider the osculating circle $\mathcal{O}_\Gamma (s_2)$ at $\Gamma(s_2)$ with radius $\rho_2$. Then via elementary geometry, the length of the chord $\ell_2$ that is inside $\mathcal{O}_\Gamma(s_2)$ is exactly $2\rho_2\sin(\theta_2)$. Therefore $\ell_2 \geq 2\rho_2\sin(\theta_2)$, and so $$L \geq 2\cos(\chi)\rho_{min}>0.$$ This means $L = \fpd{s_2}{\theta_1}(s,0) = \frac{2c-2}{\kappa_1} = \frac{2\mu}{1-\mu \kappa_1}$.

 Next, we see that 
\begin{align*}
\fpd{\theta_2}{\theta_1}(s_1,0) &:= \limit{\theta_1}{0^+}{ \fpd{\theta_2}{\theta_1}(s,\theta_1)} 
	= \limit{\theta_1}{0^+}{\frac{\sin(2\chi-\theta_2)}{\sin(\theta_2)} - \kappa_2 \fpd{s_2}{\theta_1}} \\
&= \limit{\theta_1}{0^+}{2c-1 + O(\theta_1^2) - \kappa \left(\frac{2c-2}{\kappa}\right)} 
= 2c-1 - (2c-2) = 1. 
\end{align*}

Using these two computations, one can use expressions for $\fpd{s_2}{\theta_0}$ and $\fpd{\theta_2}{\theta_0}$ to derive the expressions near $0$ from the summary. Repeating analogous calculations near $\pi$ produces the expressions in the summary above. 

\end{proof}

%\section{Next long proof}

\end{appendix}

%\newpage

\bibliographystyle{amsplain}
\nocite{*}
\bibliography{References}

\hrule

\end{document}